\documentclass[reqno]{amsart}
\usepackage{amsmath}
\usepackage{amssymb}
\usepackage{amsthm}
\usepackage{latexsym}
\usepackage{hyperref}
\usepackage{enumerate}
\usepackage{graphicx}
\usepackage[all]{xy}
\usepackage{tikz-cd} 
\usepackage{color}
\usepackage{verbatim}
\usepackage{mathtools}

\usepackage[margin=1in]{geometry}
\usepackage[mathscr]{euscript}

\newcommand \fk[1]{{{\mathfrak #1}}}
\newcommand \C[1]{{\mathcal #1}}

\newcommand\fg{\mathfrak g}
\newcommand\fn{\mathfrak n}
\newcommand\fp{\mathfrak p}
\newcommand\fm{\mathfrak m}

\newcommand \bC{{\mathbb C}}
\newcommand \bF{{\mathbb F}}

\newcommand \bR{{\mathbb R}}
\newcommand \bZ{{\mathbb Z}}

\newcommand \bO{{\mathbb O}}

\newcommand\CA{{\C A}}

\newcommand\CF{{\C F}}
\newcommand\CG{{\C G}}
\newcommand\CH{{\C H}}

\newcommand\CO{{\C O}}

\newcommand\ft{{\mathfrak t}}

\newcommand\AZ{{\mathsf{AZ}}}

\newtheorem{theorem}{Theorem}[section]

\newtheorem{conjecture}[theorem]{Conjecture}

\newtheorem{lemma}[theorem]{Lemma}
\newtheorem{proposition}[theorem]{Proposition}
\newtheorem{definition}[theorem]{Definition}

\newtheorem{example}[theorem]{Example}

\newcommand\Inn{\mathsf{Inn}}

\newcommand\Fr{\mathsf{Fr}}

\newcommand\ad{\operatorname{ad}}

\newcommand\Ad{\operatorname{Ad}}

\def\<{\langle} 
\def\>{\rangle}

\numberwithin{equation}{section}

\begin{document}

\title[Unitary representations]{Unitary representations attached to parabolic subgroups: the case of abelian unipotent radical}

\author
{Dan Ciubotaru}
        \address[D. Ciubotaru]{Mathematical Institute, University of Oxford, Oxford OX2 6GG, UK}
        \email{dan.ciubotaru@maths.ox.ac.uk}

\begin{abstract} We classify the unitary representations with integral infinitesimal character in Lusztig's category of unipotent representations in the case when the geometric parameter space comes from the action of a Levi subgroup on the abelian nilradical of a (maximal) parabolic subalgebra. We organise the unitary representations into microlocal Arthur packets. This is a test case for investigating a conjectural description of unitary representations with integral infinitesimal character.
\end{abstract}

\subjclass[2020]{22E50, 22E35}

\maketitle

\section{Introduction}

Let $F$ be a nonarchimedean local field with residue field $\bF_q$, Weil group $W_F$, Frobenius element $\Fr$, and inertia group $I_F$. Let $\CG$ be a complex almost simple $F$-split group with connected centre, and $\Inn\CG$ the set of (pure) inner twists of $\CG$. Let $G$ be the complex Langlands dual group. A simplified \emph{Langlands parameter} is a continuous group homomorphism
\[
\varphi: W_F\times SL(2,\bC)\to G,
\]
satisfying certain properties. We will be interested in the parameters for which $I_F\subseteq \ker\varphi$. To each such $\varphi$, Lusztig \cite{Lu-unip} attached a finite packet $\Pi(\varphi)$ consisting of irreducible representations for the groups $\CG'(F)$, $\CG'\in \Inn \CG$. The set of packets $\Pi(\varphi)$, when $\varphi$ varies over $G$-conjugacy classes of Langlands parameters give a partition of the set of (equivalence classes) of irreducible representations for the groups $\CG'(F)$ with unipotent supercuspidal support.

An \emph{Arthur parameter} is a continuous group homomorphism
\[
\psi: W_F\times SL(2,\bC)\times SL(2,\bC)\to G.
\]
such that $\psi(W_F)$ is bounded. 
Conjecturally, to each $\psi$ there should exist a finite packet $\CA(\psi)$ consisting of irreducible unitary $\CG'(F)$-representations  satisfying a list of desiderata. The easiest expectations to state are:
\begin{enumerate}
    \item $\Pi(\varphi_\psi)\subseteq\CA(\psi)$, where $\varphi_\psi:W_F\times SL(2,\bC)\to G$ is defined by
    \[
    \varphi_\psi(w,1)=\psi(w,\left(\begin{matrix}||w||^{1/2}&0\\0&||w||^{-1/2}\end{matrix}\right),\left(\begin{matrix}||w||^{1/2}&0\\0&||w||^{-1/2}\end{matrix}\right))\text{ and }\varphi_\psi(1,\left(\begin{matrix}1&1\\0&1\end{matrix}\right))=\psi(1,\left(\begin{matrix}1&1\\0&1\end{matrix}\right),1);
    \]
    \item If $\psi^\vee$ denotes the parameter obtained from $\psi$ by flipping the two $SL(2,\bC)$, then \[\CA(\psi^\vee)=\AZ(\CA(\psi)),\] where $\AZ$ is the Aubert-Zelevinsky involution \cite{Au-inv}.
\end{enumerate}

When $\CG$ is a classical group, an endoscopic construction of the packets $\CA(\psi)$ was realised by Arthur \cite{Ar}. In general, the only uniform candidate for constructing $\CA(\psi)$ in the $p$-adic setting is the microlocal packet proposed by Adams-Barbasch-Vogan and Vogan \cite{ABV,Vo-llc}. See also \cite{Cun-et-al} for more details.

We restrict to the set of parameters $\psi$ with $I_F\subseteq \ker\psi$. Let $s=\varphi_\psi((\Fr,\left(\begin{matrix}q^{1/2}&0\\0&q^{-1/2}\end{matrix}\right)))$ be a semisimple element in $G$. We call (the $G$-conjugacy class of) $s$ the \emph{infinitesimal character} of $\psi$. We define in the same way the infinitesimal character of any Langlands parameter $\varphi$ with $I_F\subseteq \ker\varphi$. 

Let $\fg_q=\{x\in\fg\mid \Ad(s)x=qx\}$. The connected reductive group $G(s)$ acts with finitely many orbits on $V=\fg_q$. Lusztig's results imply that there is a one-to-one correspondence between the representations in the union of L-packets $\Pi(\varphi)$ such that $\varphi(\Fr)=s$ and the set of simple $G(s)$-equivariant perverse sheaves on the vector space $V$. If $\C E$ is an equivariant perverse sheaf, let us denote by $\pi(\C E)$ the corresponding irreducible representation in an L-packet $\Pi(\varphi)$. 

A fundamental invariant of $\C E$ is the \emph{singular support} or \emph{characteristic cycle}, which  is a $\bZ$-linear combination of closures of conormal bundles $\overline {T^*_\CO V}$,  $\CO$ ranging over the (finite) set of $G(s)$-orbits on $V$,
\[
CC(\C E)=\sum_{\CO} m_\CO(\C E) ~\overline {T^*_\CO V}.
\]
Following \cite{ABV}, define for every orbit $\CO$, the microlocal packet
\begin{equation*}
    \CA(\CO)=\{\C E\mid m_\CO(\C E)\neq 0\}
\end{equation*}
The \emph{microlocal Arthur packet} is
\begin{equation}
  \CA(\psi)=\{\pi(\C E)\mid \C E\in \CA(\CO_{\phi_\psi})\},  
\end{equation}
where $\CO_{\phi_\psi}$ is the $G(s)$-orbit of the nilpotent element $\varphi_\psi(\left(\begin{matrix}1&1\\0&1\end{matrix}\right))$.

\

Now suppose $s$ is \emph{integral}, meaning $s=q^\lambda$, where $\lambda$ is integral as in Definition \ref{d:integral}. Motivated by the results of Atobe and Minguez for symplectic and orthogonal groups \cite{AM}, also \cite{Liu-et-al} for further examples and conjectures, we are interested in the classification of unitary representations in the L-packets with integral $s$. A natural expectation given the results in {\it loc. cit.} is

\medskip

\noindent{\it Integral unitary representations}: 
    Suppose $\varphi$ is a Langlands parameter with $I_F\subseteq\ker\varphi$ and $\varphi(\Fr)$ integral. For every $\pi\in \Pi(\varphi)$, $\pi$ is unitary if and only if $\pi$ belongs to a microlocal packet $\CA(\psi)$. 

\medskip

We can say a bit more about the candidates for the microlocal packets $\CA(\psi)$ if $\psi$ has integral infinitesimal character $s=q^\lambda$. Fix a Cartan subalgebra $\ft$ of $\fg$ and assume without loss of generality that $\lambda\in \ft$ is dominant with respect to the roots $\Phi$ of $\ft$ in $\fg$. Since we are interested in unitary representations of $p$-adic groups, we may restrict to those $\lambda$ that belong to the \emph{fundamental parallelipiped} (FPP)
\begin{equation}
    0\le \langle\lambda,\alpha\rangle\le 1,\text{ for all }\alpha\in\Phi\text{ simple root}.
\end{equation}
The FFP bound comes from a conjecture of Vogan for real reductive groups, inspired by the classfication works of Barbasch. In that setting, it was recently proved in \cite{DMB}. On the other hand, for unipotent representations of $p$-adic groups, the FPP bound was implicitly known and it is easy to establish, essentially due to the fact that, unlike the case of real reductive groups, the only possible reducibility hyperplanes for the principal series occur when a root equals $0$ or $1$. For a more general proof of FPP for $p$-adic groups see \cite{JLLMB}.

In the integral case, $\psi(\Fr)=1$, so $\psi$ is the same as a group homomorphism $\psi: SL(2,\bC)\times SL(2,\bC)\to G$. More concretely, this is equivalent to a pair of commuting Lie triples $\{e_L,h_L,f_L\}$ and $\{e_A,h_A,f_A\}$ such that
\[
h_L+h_A=2\lambda.
\]
The centraliser $G(s)=G(\lambda)$ is a Levi subgroup $M$ of $G$ and the $q$-eigenspace for $\Ad(s)$ in $\fg$ is the same the $(+1)$-eigenspace $V=\fg_1$ of $\ad(\lambda)$ in $\fg$. As explained in section \ref{s:2}, the $M$-space $V$ can be identified with $V=\fn/[\fn,\fn]$, where $\fn$ is the nilradical of the Jacobson-Morozov parabolic for $\lambda$. The commuting Lie triples have the property that
\[
e_L,e_A\in \fg_1=V,\ f_L,f_A\in \fg_{-1}=V^*,\ h_L,h_A\in\fg_0=\fk m.
\]
We may arrange that $h_L,h_A\in \ft$.

The Kazhdan-Lusztig parameter for $\varphi_\psi$ is the $M$-orbit $\CO_{\varphi_\psi}$ of $e_L$ in $V$. If $\CO$ is any $M$-orbit in $\fg_1$, there exists an \emph{adapted} Lie triple $\{e,h,f\}$, $e\in\CO$, $h\in\fg_0$, $f\in \fg_{-1}$, so the question is if we can find an appropriate commuting partner 
Lie triple.
\begin{definition}\label{d:pair}
An $M$-orbit $\CO$ is said to be \emph{of Arthur type} if it has an adapted Lie triple $\{e,h,f\}$ such that there exists another adapted triple $\{e',h,',f'\}$ that commutes with it and $h+h'=2\lambda$. In this case we say that $(\CO,\CO')$ is an \emph{Arthur pair}, where $\CO'=M\cdot e'$.
\end{definition}

The above expectation can be reformulated more precisely as follows.

\begin{conjecture}[Integral unitary representations] \label{c:main}Suppose $\varphi$ is a Langlands parameter with $I_F\subseteq\ker\varphi$ and $\varphi(\Fr)=q^\lambda$, $\lambda$ dominant integral in FPP. Let $P=MN$ be the parabolic subgroup defined by $\lambda$. The Levi subgroup $M=G(\lambda)$ acts on $V=\fg_1=\fn/[\fn,\fn]$ with finitely many orbits. Suppose $\pi(\C E)\in \Pi(\varphi)$ is parametrised by the simple $M$-equivariant perverse sheaf  $\C E$. Then $\pi(\C E)$ is unitary if and only if there exists an Arthur pair $(\CO,\CO')$ of $M$-orbits in $V$ (Definition \ref{d:pair}) such that $\overline {T^*_\CO V}$ occurs in the characteristic cycle $CC(\C E)$.

\end{conjecture}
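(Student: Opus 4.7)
The plan is to establish Conjecture \ref{c:main} in the abelian-nilradical case by reducing it to a case-by-case verification over the short-graded pairs $(\fg,\fm)$ (the Hermitian symmetric types), for each of which the $M$-action on $V=\fn$ is prehomogeneous with a known finite orbit poset. For each such pair, I would run three parallel classifications indexed by dominant integral $\lambda$ in the FPP: (i) the set of Arthur pairs $(\CO,\CO')$ in Definition \ref{d:pair}; (ii) the characteristic cycle multiplicities $m_\CO(\C E)$; (iii) the unitary set among the $\pi(\C E)$. The conjecture then becomes the assertion that the set produced by (ii) agrees with the unitary set of (iii) under the Arthur-pair cut from (i).

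For (i), the condition $h+h'=2\lambda$ for adapted commuting Lie triples with $e,e'\in\fg_1$, $f,f'\in\fg_{-1}$, $h,h'\in\fm$ is combinatorial in the abelian setting: commuting triples in a Hermitian symmetric pair decompose into orthogonal rank-one summands, so Arthur pairs are indexed by pairs of rank-type invariants whose total invariant is dictated by $2\lambda$, giving an explicit short list per $\lambda$. For (ii), the $M$-orbit closures in $V$ are normal spherical varieties with well-controlled singularities, and the IC-stalks are governed by parabolic Kazhdan-Lusztig polynomials; Kashiwara's local index formula then produces $m_\CO(\C E)$. For (iii), via the Kazhdan-Lusztig-Lusztig correspondence the $\pi(\C E)$ with infinitesimal character $q^\lambda$ correspond to simple modules of a graded affine Hecke algebra attached to $(M,V)$, and I would invoke the unitarity classification for such algebras (Barbasch-Moy, Ciubotaru-Opdam), supplemented by Barbasch's unitary dual in the Hermitian symmetric cases.

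The matching step is the crux. One structural input comes for free from desideratum (2) of the introduction: the Aubert-Zelevinsky involution $\AZ$ swaps $(e,h,f)\leftrightarrow(e',h',f')$ in any Arthur pair and preserves unitarity, so both sides of the conjectured equivalence are $\AZ$-stable, and extreme cases (tempered, and $\AZ$-duals of tempered) are immediate. The main obstacle I anticipate is the $\Leftarrow$ direction for $\pi(\C E)$ that are neither tempered nor $\AZ$-dual to tempered but still carry an Arthur orbit in $CC(\C E)$: exhibiting unitarity intrinsically there may require a detailed Hecke-algebra signature computation, or a deformation argument along an Arthur-type family $\lambda_t$ exploiting preservation of unitarity away from reducibility hyperplanes. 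In the abelian-nilradical case the FPP bound restricts the possible hyperplanes to those defined by a simple root equal to $0$ or $1$, which should keep the deformation argument manageable and make the final matching a finite check per Hermitian symmetric type.
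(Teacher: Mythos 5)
Your plan is a reasonable roadmap for the general Conjecture~\ref{c:main}, but for the abelian-nilradical case (Theorem~\ref{t:main-abelian}) the paper avoids the hard matching step entirely by exploiting two structural simplifications you have not noticed, and missing them would make your case-by-case program far more laborious than it needs to be.

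First, in the abelian case the Arthur-pair condition is essentially vacuous. Using Panyushev's upper canonical string, Proposition~\ref{p:Arthur-pair} shows that when $w_0\cdot\lambda=-\lambda$ \emph{every} $M$-orbit $\CO_i$ admits an Arthur partner, namely $\CO_i^\vee=\CO_{r-i}$: one writes $x_i=\sum_{j\le i}X_{\gamma_j}$ and $x_{r-i}^\vee=\sum_{j>i}X_{\gamma_j}$, the strong orthogonality of the $\gamma_j$ makes the two adapted triples commute, and $\lambda=\tfrac12(h_i+h_{r-i}^\vee)$ is Panyushev's criterion for regularity. Since $\overline{T^*_{\CO}V}$ always appears in $CC(\C E)$ for $\CO$ the support of $\C E$, \emph{every} $\pi(\C E)$ satisfies the Arthur-pair condition in the conjecture, so the ``if and only if'' collapses to: every irreducible with this infinitesimal character is unitary (Theorem~\ref{t:main-abelian}(2)). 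No characteristic cycle or parabolic Kazhdan--Lusztig computation is needed to decide unitarity; the $CC$ computations in Section~\ref{s:explicit} only serve to describe the microlocal packets explicitly.

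Second, the reason \emph{all} hermitian representations are then unitary is the observation that abelian nilradical plus $\lambda$ integral in FPP forces $\langle\lambda,\gamma\rangle=1$, i.e.\ $\lambda$ is the extreme vertex of the fundamental alcove. Proposition~\ref{p:alcove} then runs a single uniform complementary-series deformation of the principal series from $0$ to the alcove boundary, plus a short inspection of cuspidal supports outside the Iwahori block. You do gesture at a deformation argument and at the restriction of reducibility hyperplanes in FPP, which is the right instinct, but the decisive point is not that the hyperplanes are simple or few --- it is that $\lambda$ sits exactly on the boundary of the first alcove, so the deformation reaches $\lambda$ as a limit of unitary points. Combined with Proposition~\ref{p:hermitian} ($w_0\cdot\lambda=-\lambda$ is necessary for hermiticity; $w_0$ central suffices) and two small explicit checks where $w_0$ is noncentral (split $A_{2n-1}$ with $\ell=n$, and $D_n$ node $1$ with $n$ odd), this gives a uniform proof without any signature or multiplicity matching. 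Your route --- enumerating Arthur pairs, computing $m_\CO(\C E)$ via Kashiwara's index formula and IC-stalks, and comparing to a Hecke-algebra unitary classification --- is not wrong, but it recreates a general-case difficulty that the abelian case does not actually have; carrying it out would also require unitarity data at integral infinitesimal characters that is not cleanly available outside classical type.
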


\begin{example}
The irreducible spherical representation $\pi_0$ with Satake parameter $q^\lambda$ is parametrised by $\C E_0$ (the skyscraper sheaf at $0$) and $CC(\C E_0)=\{0\}\times V^*=\overline {T^*_0 V}$. If the zero orbit is of Arthur type, it must be the case that $\lambda=\frac 12 h$, for the neutral element $h$ in a Lie triple for the open dense orbit. The conjecture implies that $\pi_0$ is unitary if and only if $2\lambda$ is  the middle element of a Lie triple in $\fg$. This was verified in \cite{CH} using the classification of the spherical unitary dual \cite{Ba,BC,Ci-F4}.
\end{example}

Let $w_0$ be the longest element in the finite Weyl group of $G$. Since $w_0\cdot h=-h$ for every middle element of a Lie triple in $\fg$ (\cite{GS}), it immediately follows that for Arthur pairs to exist at all at $\lambda$, we must have $w_0\cdot \lambda=-\lambda$. Moreover, if $\{e,h,f\}$ and $\{e',h',f'\}$ are commuting Lie triples, $\{e+e',h+h',f+f'\}$ is also a Lie triple. This means that $2\lambda$ must be the middle element of a nilpotent orbit, in fact, an even nilpotent orbit since $\lambda$ is assumed to be integral. Hence a weaker form of Conjecture \ref{c:main} is:

\begin{conjecture}[Weaker form]
    If $\lambda$ is integral, there exist unitary representations with Langlands parameter $\varphi$ such that $\varphi(W_F)=q^{\langle\lambda\rangle}$ if and only if $\lambda$ is half the middle element of an even nilpotent $G$-orbit.
\end{conjecture}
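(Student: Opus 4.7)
The plan is to derive both directions of the weaker conjecture from Conjecture \ref{c:main}, together with the structural observations already recorded just above its statement (commutativity of adapted triples, the \cite{GS} characterisation of middle elements, and the computation of $CC(\C E_0)$ in the example).

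\emph{Necessity.} Suppose a unitary $\pi\in\Pi(\varphi)$ with $\varphi(\Fr)=q^\lambda$ exists. Conjecture \ref{c:main} supplies an $M$-equivariant perverse sheaf $\C E$ with $\pi=\pi(\C E)$ and an Arthur pair $(\CO,\CO')$ appearing in $CC(\C E)$. Pick commuting adapted triples $\{e,h,f\}$ and $\{e',h',f'\}$ with $h+h'=2\lambda$. Commutativity of the two $\mathfrak{sl}_2$-subalgebras makes $\{e+e',\,2\lambda,\,f+f'\}$ into an $\mathfrak{sl}_2$-triple in $\fg$, exhibiting $2\lambda$ as the middle element of a nilpotent $G$-orbit. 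Integrality of $\lambda$ forces all eigenvalues of $\ad(2\lambda)$ into $2\bZ$, so this orbit is even.

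\emph{Sufficiency.} Conversely, let $\{e',\,2\lambda,\,f'\}$ be an $\mathfrak{sl}_2$-triple in $\fg$ whose $G$-orbit is even. Since $[2\lambda,e']=2e'$ and $[2\lambda,f']=-2f'$, we have $e'\in\fg_1=V$ and $f'\in\fg_{-1}$, and by Bala--Carter--Vinberg the orbit $M\cdot e'$ is the open dense $M$-orbit on $V$. Pair this triple with the degenerate triple $\{0,0,0\}$ attached to the zero $M$-orbit: the two triples commute and their middle elements sum to $2\lambda$, so $(\{0\},M\cdot e')$ is an Arthur pair. Since $CC(\C E_0)=\overline{T^*_0 V}$ by the example, Conjecture \ref{c:main} predicts that the spherical representation $\pi_0$ with Satake parameter $q^\lambda$ is unitary, producing the required unitary representation. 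Independently, in the abelian nilradical setting of this paper one can also read unitarity of $\pi_0$ off the spherical classification of \cite{CH,Ba,BC,Ci-F4}.

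\emph{Main obstacle.} Each implication is essentially a one-line consequence of Conjecture \ref{c:main}, so the real substance is the conditional nature of the argument. To prove the weaker conjecture unconditionally one would need to remove the appeal to the main conjecture in the necessity direction, i.e.\ to rule out the existence of any unitary representation at integral $\lambda$ whenever $2\lambda$ is not a nilpotent middle element; this presumably requires a Vogan- or Dirac-cohomology-type bound on infinitesimal characters. In the abelian unipotent radical setting of the present paper, however, the explicit description of the $M$-orbits on $V=\fn/[\fn,\fn]$ together with the corresponding perverse-sheaf calculations should make such a direct verification accessible, yielding the weaker conjecture in this case without circularity.
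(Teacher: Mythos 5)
The statement you are asked about is labelled a \textbf{Conjecture} in the paper, not a theorem; the author does not prove it but records it as a consequence of Conjecture~\ref{c:main}. With that understood, your two directions deserve separate treatment.

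Your \emph{necessity} direction (unitary $\Rightarrow$ $2\lambda$ is a middle element of an even nilpotent orbit, conditional on Conjecture~\ref{c:main}) is exactly the paper's line of reasoning in the paragraph just before the conjecture: a unitary $\pi(\C E)$ furnishes an Arthur pair, the two commuting adapted triples $\{e,h,f\}$, $\{e',h',f'\}$ produce a third triple $\{e+e',h+h',f+f'\}$ with middle element $2\lambda$, and integrality of $\lambda$ forces the orbit to be even. This is correct, and it is genuinely conditional — nothing in the paper makes it otherwise, and your closing paragraph is right that removing that dependency is the real open problem.

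Your \emph{sufficiency} direction is where you diverge from the paper and make the argument weaker than it needs to be. You invoke Conjecture~\ref{c:main} via the Arthur pair $(\{0\}, M\cdot e')$ and $CC(\C E_0)=\overline{T^*_0V}$, so your conclusion that $\pi_0$ is unitary is conditional. But this direction is already known unconditionally: the paper (in the sentence immediately after the conjecture) invokes the Kazhdan--Lusztig and Lusztig parametrisations — if $\lambda=\tfrac12 h$ for $h$ a middle element of a nilpotent $G$-orbit, the open $G(\lambda)$-orbit in $\fg_1$ parametrises \emph{tempered}, hence unitary, representations. That gives the existence of unitary representations at $q^\lambda$ without any appeal to the main conjecture or to spherical-dual classifications, and without restricting to the abelian-radical case. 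You should replace your conditional sufficiency argument with this unconditional one — it is both simpler and stronger, and it is what the paper actually relies on. (Incidentally, the paper's sentence calls that direction the ``only if'' part; under the usual reading of ``$A$ if and only if $B$'' it is the ``if'' part, but this is a slip of the paper and you have parsed the logic correctly.)

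One more small caution: your appeal to ``Bala--Carter--Vinberg'' for $M\cdot e'$ being open and dense in $V=\fg_1$ when $2\lambda$ is even with middle element $2\lambda$ is sound (this is Vinberg/Dynkin--Kostant theory for $\bZ$-gradings), but it is not needed once you switch to the paper's tempered-representations argument, since that already singles out the open orbit.
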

The ``only if'' part is known by the work of Kazhdan-Lusztig (the Iwahori case) and Lusztig (in general): if $\lambda=\frac 12h$ for $h$ a middle element of a nilpotent $G$-orbit, then the open $G(\lambda)$-orbit in $\fg_1$ parametrises tempered, hence unitary, representations. 

\medskip

Since $M$ acts with finitely many orbits on $V$, there exists a bijection, the \emph{Piasetskii duality}, between the $M$-orbits in $V$ and the $M$-orbits in $V^*$. Denote the dual of $\CO$ by $\CO^*$. This is an $M$-orbit in $V^*=\fg_{-1}$. Let $\{e^*,h^*,f^*\}$ be a Lie triple for $\CO^*$, with $e^*\in\fg_1, h^*\in\fk t, f^*\in\fg_1$. Let $\CO^\vee$ be the $M$-orbit of $f^*$, this is an $M$-orbit in $V=\fg_1$. Set $e^\vee=f^*$, $h^\vee=-h^*$, and complete to a Lie triple $\{e^\vee,h^\vee,f^\vee\}$, $f^\vee\in \fg_{-1}$. This can be done by taking the image of $\{e^*,h^*,f^*\}$ under the Chevalley involution of $\fg$. The pair $(\CO,\CO^\vee)$ is a natural candidate for constructing Arthur pairs. Our main result is:

\begin{theorem}\label{t:main-abelian}
    Let $\lambda$ be an integral dominant infinitesimal character in FPP. Suppose the associated parabolic subalgebra $\fp=\fm+\fn$ has abelian nilradical $\fn$. In particular $M$ is a maximal Levi subgroup and $V=\fg_1=\fn$. 
    \begin{enumerate}
    \item If $w_0\lambda\neq -\lambda$, there are no Arthur pairs $(\CO,\CO')$, and no representations with infinitesimal character $q^\lambda$ are hermitian. 
        \item Suppose $w_0\lambda= -\lambda$. 
        For every $M$-orbit $\CO$, $(\CO,\CO^\vee)$ is an Arthur pair. 
        Every irreducible representation $\pi$ with infinitesimal character $q^\lambda$ is unitary.
    \end{enumerate}
\end{theorem}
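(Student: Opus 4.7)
Part (1) reduces to two standard observations. First, an irreducible representation of the $p$-adic group with infinitesimal character $q^\lambda$ ($\lambda$ dominant) admits a nondegenerate invariant hermitian form if and only if $w_0\lambda=-\lambda$. Second, if $\{e,h,f\}$ and $\{e',h',f'\}$ are commuting adapted triples realising an Arthur pair, then $\{e+e',h+h',f+f'\}$ is itself an $\mathfrak{sl}_2$-triple in $\fg$ whose middle element $h+h'=2\lambda$ must satisfy $w_0(2\lambda)=-2\lambda$ by the cited result of \cite{GS}. Both conclusions of (1) follow.

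\medskip

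For the Arthur-pair half of (2), the hypotheses (abelian $\fn$, $M$ maximal, $\lambda$ dominant integral in FPP) force $\lambda$ to be the fundamental coweight $\omega_i^\vee$ attached to a cominuscule simple root $\alpha_i$, so $(\fg,\fm,V)$ is the infinitesimal data of a Hermitian symmetric pair and $w_0\lambda=-\lambda$ is precisely the tube-type condition. In the tube case one has a maximal system of strongly orthogonal roots $\gamma_1,\dots,\gamma_r$ with $\fg_{\gamma_j}\subset V$, whose $\mathfrak{sl}_2$-triples $\{e_{\gamma_j},h_{\gamma_j},f_{\gamma_j}\}$ pairwise commute and satisfy $\sum_j h_{\gamma_j}=2\lambda$. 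The $M$-orbits on $V$ are classified by their rank $k\in\{0,1,\dots,r\}$ with representative $e^{(k)}=\sum_{j\le k}e_{\gamma_j}$, and the complementary partial sum $e^{(k)\prime}=\sum_{j>k}e_{\gamma_j}$ generates an adapted triple commuting with the first, whose middle element $h^{(k)\prime}$ satisfies $h^{(k)}+h^{(k)\prime}=2\lambda$. Piasetskii duality followed by the Chevalley involution sends the rank-$k$ orbit to the rank-$(r-k)$ orbit in this indexing, so $\CO_k^\vee=\CO_{r-k}$ and $(\CO_k,\CO_k^\vee)$ is exactly the Arthur pair just exhibited.

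\medskip

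For the unitarity half of (2), every irreducible $\pi$ with infinitesimal character $q^\lambda$ is a Langlands subquotient of the unramified principal series $I(q^\lambda)$; since $w_0\lambda=-\lambda$ this series carries a nondegenerate hermitian form and the problem reduces to positivity on each constituent. My plan is to match each $\pi(\C E_{\CO_k})$ to a component of a unitarily induced representation attached to the decomposition $\lambda=\tfrac12 h^{(k)}+\tfrac12 h^{(k)\prime}$: induce from the parabolic centralising the second factor a tempered representation with real infinitesimal parameter $\tfrac12 h^{(k)}$, twisted by the unitary character $q^{\tfrac12 h^{(k)\prime}}$, and use Lusztig's geometric parametrisation to check that the resulting standard module contains $\pi(\C E_{\CO_k})$; the spherical case $k=0$ is covered by \cite{CH}. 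The main obstacle lies here: while writing down the Arthur pairs via the Cayley transform is transparent, matching the proposed unitary induction with the correct Lusztig parameter requires either a direct Kazhdan--Lusztig multiplicity computation in the graded affine Hecke algebra at central character $\lambda$, or a case-by-case verification across the tube-type cominuscule parabolics.
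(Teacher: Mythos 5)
Part (1). Your second observation is exactly the paper's reasoning: commuting adapted triples sum to an $\mathfrak{sl}_2$-triple with middle element $2\lambda$, so $w_0(2\lambda)=-2\lambda$ by the cited Gunnells--Sommers result, which kills the existence of Arthur pairs. Be careful, though, with the first observation: you assert that $\pi$ is hermitian \emph{if and only if} $w_0\lambda=-\lambda$. Only the ``only if'' direction is true in general (this is Proposition~\ref{p:hermitian}); the converse requires $w_0$ to act as $-1$ on the relevant weight data (e.g.\ $w_0$ central in $W$), and when $w_0$ is not central one has to check hermiticity case by case (as the paper does for $A_{2n-1}$ with the middle node and for $D_n$, node $1$, $n$ odd). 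For part (1) you only use ``only if'', so your conclusion stands, but the global ``if and only if'' statement is false and should not be invoked.

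Part (2), Arthur pairs. This half is essentially the paper's Proposition~\ref{p:Arthur-pair}: cominuscule node, maximal strongly orthogonal root string $\gamma_1,\dots,\gamma_r$ in $V$, complementary partial sums give commuting adapted triples with $h^{(k)}+h^{(k)\prime}=2\lambda$ precisely when $\sum_j h_{\gamma_j}=2\lambda$, i.e.\ in the tube/regular case, and Piasetskii plus Chevalley gives $\CO_k^\vee=\CO_{r-k}$ (Theorem~\ref{t:abelian-class}(4)). Fine.

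Part (2), unitarity. Here there is a genuine gap, and it is not merely a matter of finishing a computation. The module you propose to induce is twisted by $q^{\frac12 h^{(k)\prime}}$, which is a \emph{real positive} character of the centre of the chosen Levi, not a unitary character; so the construction you describe is not unitary parabolic induction, and positivity of the form does not follow from it. Indeed, the representations $\pi(\CE_{\CO_k})$ are in general \emph{not} unitarily induced from any proper parabolic (the spherical case $k=0$ already shows this when $\lambda$ is half a Dynkin element of a rigid orbit), so no matching of Lusztig parameters can rescue the strategy. The paper's actual argument is completely different and you do not touch it: the hypothesis that $\fn$ is abelian forces $\langle\lambda,\gamma\rangle=1$ for the highest root $\gamma$, placing $\lambda$ on the far wall of the closed fundamental alcove; Proposition~\ref{p:alcove} then deforms the unramified principal series $X(\chi_{t\lambda})$ from $t=0$ (unitary) to $t=1$, using that no reducibility hyperplane $\langle\nu,\alpha\rangle=1$ is crossed in the interior, so the invariant form stays positive and its limit is positive semi-definite, making every subquotient at $t=1$ unitary; the non-Iwahori Bernstein blocks with cuspidal data on small Levis are handled separately. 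You would need to reproduce this deformation (or an equivalent) to close the unitarity claim; the unitary-induction route as stated cannot work.
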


The proof is in Propositions \ref{p:Arthur-pair} and \ref{p:alcove}. Notice that this verifies Conjecture \ref{c:main} in the abelian nilradical case, a particularly nice and uniform case. Claim (1) holds for all real $\lambda$, see Proposition \ref{p:hermitian}, not just the integral ones. 

\medskip

The classification of parabolic subgroups with abelian unipotent radical is recalled in Table \ref{ta:abelian}. It is worth noting that the condition $w_0\cdot \lambda=-\lambda$ is known to be equivalent to the prehomogeneous vector space $V$ being \emph{regular}, in the sense of the centraliser of a generic point being reductive (\cite{Rub}). In section \ref{s:explicit}, we discuss in detail the geometry of the $M$-space $\fn$ when $\fn$ is abelian and record the microlocal packets in Theorem \ref{t:abelian-micro}. This is essentially already available in different places in the literature, for example \cite{LW,LR,Pa,Roh,Sato,Ki}, and we include the results here for convenience.

The case of the abelian nilradical is part of a more general family of examples, namely when the $M$-space $V$ is irreducible and \emph{spherical}. As recalled in section \ref{s:2}, $V=\fn/[\fn,\fn]$ is irreducible if and only if $M$ is maximal. $V$ is spherical if a Borel subgroup of $M$ has an open orbit in $V$. The classification of irreducible spherical vector spaces is known from \cite{Kac}. An important feature is that for spherical vector spaces, the characteristic cycles are multiplicity free, see \cite{LW}. In a follow-up paper, we will consider the general maximal parabolic subgroups. 

\medskip

\noindent{\bf Acknowledgments.} The author thanks Hiraku Atobe and Alberto Minguez for a stimulating correspondence about the classification of the unitary dual at integral infinitesimal parameters.

\smallskip

\noindent{\it ``Say the line, Bart!''.} For the purpose of open access, the author has applied a CC BY public copyright licence to any author accepted manuscript arising from this submission.

\section{Setup}\label{s:2} Let $G$ be a complex connected almost simple group over $\mathbb C$. For the applications to representations of $p$-adic groups, $G$ is the Langlands dual group. Assume that $G$ has simply-connected derived subgroup. Fix a Borel subgroup $B$ and a maximal torus $T\subset B$. Let $\Delta\subset\Phi^+\subset\Phi$ be the sets of simple, positive, all roots, respectively, of $T$ in $G$. The positive roots $\Phi^+$ correspond to $B$. Let $\fg,\fk b,\ft$ be the corresponding Lie algebras. Let $\fg_\alpha$ denote the root space of $\alpha$ and fix a pinning $\{X_\alpha\in\fg_\alpha\mid\alpha\in \Phi^+\}$.

\begin{definition}\label{d:integral}
    An element $\lambda\in\ft$ is called \emph{integral} if $\langle\lambda,\alpha\rangle\in\bZ$ for all $\alpha\in\Phi$.
\end{definition}

Let $\{\omega^\vee_\alpha:\alpha\in\Delta\}$ be the set of fundamental coweights,i.e., the dual basis of $\Delta$ in $\ft$. Of course, $\lambda$ is integral if and only if $\lambda=\sum_{\alpha\in\Delta}c_\alpha\omega_\alpha^\vee$ with $c_\alpha\in\bZ$. Since we will be interested in the applications to the unitary representations of $p$-adic groups, we may restrict to those $\lambda$ that belong to the \emph{fundamental parallelipiped} (FPP)
\begin{equation}
    0\le \langle\lambda,\alpha\rangle\le 1,\text{ for all }\alpha\in\Delta.
\end{equation}

Equivalently, an integral $\lambda$ is in FPP if and only if $c_\alpha\in\{0,1\}$, for all $\alpha\in\Delta$. Fix an integral $\lambda\in\ft$ belonging to FPP.

Define $J\subseteq \Delta$ by $J=\{\alpha\in\Delta\mid c_\alpha=\langle\lambda,\alpha\rangle=0\}$. With this notation, $\lambda=\sum_{\alpha\in\Delta\setminus J}\omega_\alpha^\vee$. Let $\fm\supseteq \ft$ be the Levi subalgebra of $\fg$ defined by the simple roots $J$. Let $\fn=\bigoplus_{\beta\in\Phi^+,\langle\lambda,\beta\rangle>0} \fg_\beta$ and $\fp=\fm\oplus \fn$ the parabolic subalgebra of $\lambda$. Also denote $\fn^-=\bigoplus_{\beta\in\Phi^+,\langle\lambda,\beta\rangle<0} \fg_\beta$. Let $P=MN$ the corresponding parabolic subgroup. It is clear that, in this way, we have a one-to-one correspondence between the set of integral $\lambda$ in FPP and the set of standard parabolic subgroups $P$ with respect to $B$.

In this setting, the Kazhdan-Lusztig geometry involves the action of $M=Z_G(\lambda)$ on the $(+1)$-eigenspace of $\ad(\lambda)$ on $\fg$,
\[
\fg_1=\bigoplus_{\beta\in\Phi^+,\langle\lambda,\beta\rangle=1} \fg_\beta.
\]
Every positive root $\beta$ can be written uniquely as
\[
\beta=\sum_{\alpha'\in J} a_{\alpha'} \alpha'+\sum_{\alpha\in\Delta\setminus J} b_{\alpha}\alpha,
\]
$a_\alpha,b_{\alpha'}\in \bZ_{\ge 0}.$ 
Following \cite{Roh}, define $\mathsf{shape}(\beta)=\sum_{\alpha\in\Delta\setminus J} b_{\alpha}\alpha$ and $\mathsf{level}(\beta)=\sum_{\alpha\in\Delta\setminus J} b_{\alpha}.$
\begin{lemma}
    As an $M$-representation, $\fg_1\cong \fn/[\fn,\fn].$
\end{lemma}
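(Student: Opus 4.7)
The plan is to reduce the statement to a computation with the level grading on positive roots. First I would observe that since $\lambda=\sum_{\alpha\in\Delta\setminus J}\omega_\alpha^\vee$, the pairing $\langle\lambda,\beta\rangle$ for a positive root $\beta=\sum_{\alpha'\in J}a_{\alpha'}\alpha'+\sum_{\alpha\in\Delta\setminus J}b_\alpha\alpha$ equals $\sum_{\alpha\in\Delta\setminus J}b_\alpha=\mathsf{level}(\beta)$. Consequently
\[
\fn=\bigoplus_{k\geq 1}V_k,\qquad V_k:=\bigoplus_{\mathsf{level}(\beta)=k}\fg_\beta,
\]
and $\fg_1=V_1$. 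Each $V_k$ is $M$-stable because $M=Z_G(\lambda)$ preserves the $\ad(\lambda)$-eigenspaces.

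Next I would establish that $[\fn,\fn]=\fn_{\geq 2}:=\bigoplus_{k\geq 2}V_k$. The inclusion $[\fn,\fn]\subseteq\fn_{\geq 2}$ is immediate since levels add under bracketing. For the reverse inclusion, the task is to show that every $\fg_\beta$ with $\mathsf{level}(\beta)\geq 2$ can be written as $[\fg_{\beta_1},\fg_{\beta_2}]$ for some positive roots $\beta_1,\beta_2$ each of level $\geq 1$ with $\beta_1+\beta_2=\beta$. I would prove this by induction on the height of $\beta$, using the standard fact that every non-simple positive root $\beta$ admits a decomposition $\beta=\alpha+\gamma$ with $\alpha\in\Delta$ and $\gamma\in\Phi^+$. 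If $\alpha\in\Delta\setminus J$ we are done. If $\alpha\in J$, then $\gamma$ still has level $\geq 2$, so by induction $\gamma=\gamma_1+\gamma_2$ with both summands of positive level; applying the Jacobi identity to $[\fg_\alpha,[\fg_{\gamma_1},\fg_{\gamma_2}]]$ shows that at least one of $\alpha+\gamma_1$ or $\alpha+\gamma_2$ must be a root, say $\delta=\alpha+\gamma_1$, and then $\beta=\delta+\gamma_2$ is the desired decomposition, because $\delta$ has the same level as $\gamma_1$ (since $\alpha\in J$).

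Granted the identification $[\fn,\fn]=\fn_{\geq 2}$, the canonical inclusion $V_1\hookrightarrow\fn$ followed by projection to $\fn/[\fn,\fn]$ is an isomorphism of vector spaces, and it is $M$-equivariant because $V_1$, $\fn$, and $\fn_{\geq 2}$ are all $M$-stable. This gives the claimed isomorphism of $M$-modules.

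The main obstacle is the step $\fn_{\geq 2}\subseteq[\fn,\fn]$, which is the only nontrivial input; everything else is bookkeeping with the level grading. The Jacobi-based induction above sidesteps the potential pitfall that a height-induction decomposition $\beta=\alpha+\gamma$ might only be available with $\alpha\in J$, which would not by itself exhibit $\fg_\beta$ as a bracket of elements inside $\fn$.
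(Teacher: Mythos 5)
Your argument is correct and follows essentially the same route as the paper: identify $\langle\lambda,\beta\rangle$ with $\mathsf{level}(\beta)$, so $\fg_1=V_1$, and then identify $V_1$ with $\fn/[\fn,\fn]$. The paper states the last identification without proof (it is implicit in the cited work of R\"ohrle), whereas you supply the missing detail $[\fn,\fn]=\fn_{\geq 2}$ via a clean height induction and a Jacobi-identity argument; that filling-in is sound and the only point needing care (that at least one of $\alpha+\gamma_1$, $\alpha+\gamma_2$ is a root) is handled correctly.
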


\begin{proof}
    Since $\lambda=\sum_{\alpha\in\Delta\setminus J}\omega_\alpha^\vee$, we have $\langle \lambda,\beta\rangle=1$ if and only if $\sum_{\alpha\in \Delta\setminus J} b_\alpha \langle\lambda,\alpha\rangle=1$. Moreover, this is if and only if $\mathsf{level}(\beta)=1$ and $\mathsf{shape}(\beta)=\alpha$ for a root $\alpha\in\Delta\setminus J$. It means that $\fg_1=\bigoplus_{\mathsf{level}(\beta)=1} \fg_\beta$, and this is precisely $\fn/[\fn,\fn].$
\end{proof}
Denote $V=\fn/[\fn,\fn]$, a finite-dimensional $M$-representation. This is known as the \emph{first internal Chevalley module}. For every $\alpha\in\Delta\setminus J$, denote $V_\alpha=\text{span}\{X_\beta\mid \mathsf{shape}(\beta)=\alpha\}.$

\begin{proposition}[\cite{Roh}] 
\begin{enumerate}
    \item $V=\bigoplus_{\alpha\in\Delta\setminus J} V_\alpha$ is the decomposition of $V$ into irreducible $M$-subrepresentations. The irreducible $M$-module 
    $V_\alpha$ has highest weight $\max_\alpha$, the unique maximal-height positive root of shape $\alpha$.
    \item The dual (contragredient) $M$-representation is $V^*=\fn^-/[\fn^-,\fn^-]=\bigoplus_{\alpha\in\Delta\setminus J} V_{-\alpha}^*$. The irreducible $M$-module $V_\alpha^*$ has highest weight $-\alpha.$
\end{enumerate}
    
\end{proposition}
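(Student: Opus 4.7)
The plan is to establish the decomposition in two stages: first show each $V_\alpha$ is $M$-stable and exhibit a highest-weight vector, then prove irreducibility; the dual statement will follow by a simple observation about the lowest weight.

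First, I would note that the vector space decomposition $V=\bigoplus_{\alpha\in\Delta\setminus J} V_\alpha$ is immediate from the uniqueness of the shape of a positive root. To see that each $V_\alpha$ is $M$-stable, I would argue that $T$ preserves each root space and hence each $V_\alpha$, and that $\ad(X_{\pm\alpha'})$ for $\alpha'\in J$ sends $X_\beta$ into $\bC\, X_{\beta\pm\alpha'}$ (or zero); since $\alpha'$ has shape zero, these operators preserve shape. As $M$ is generated by $T$ together with the root subgroups for $\alpha'\in J$, this gives $M$-stability of each $V_\alpha$.

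Next, for each $\alpha\in\Delta\setminus J$, I would let $S_\alpha$ denote the set of positive roots of shape $\alpha$ and pick a maximal element $\max_\alpha\in S_\alpha$ with respect to the partial order on $\Phi^+$. For such a maximum, $\max_\alpha+\gamma\notin\Phi$ for every simple $\gamma\in J$ (otherwise the sum would lie in $S_\alpha$ with larger height), so $X_{\max_\alpha}$ is annihilated by the positive-root operators of $\fm$, making it a highest-weight vector of weight $\max_\alpha$. To prove irreducibility and simultaneously the uniqueness of $\max_\alpha$, I would invoke the standard fact that any element of $S_\alpha$ can be linked to a maximal element by a chain of single $J$-simple-root additions, each intermediate sum remaining in $S_\alpha$ (convexity of root strings in a nilradical, cf.\ \cite{Roh}). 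Composing the appropriate operators $\ad(X_{-\gamma})$ with $\gamma\in J\cap\Delta$ then recovers any $X_\beta$ from $X_{\max_\alpha}$ up to a nonzero scalar, so the cyclic $\fm$-submodule of $V_\alpha$ generated by $X_{\max_\alpha}$ exhausts $V_\alpha$. Irreducibility then forces the highest-weight vector to be unique, whence $\max_\alpha$ itself is unique.

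For part (2), I would run the same argument on the opposite nilradical $\fn^-$: its shape grading is indexed by $-\alpha$ for $\alpha\in\Delta\setminus J$, yielding the analogous decomposition $\fn^-/[\fn^-,\fn^-]=\bigoplus V_{-\alpha}^*$, and the Killing form supplies a nondegenerate $M$-equivariant pairing identifying this with $V^*$. To pin down the highest weight $-\alpha$ of $V_\alpha^*$, I would note that the minimum of $S_\alpha$ is the simple root $\alpha$ itself, since every element of $S_\alpha$ equals $\alpha$ plus a nonnegative integer combination of roots in $J$. Hence the lowest weight of $V_\alpha$ is $\alpha$, and dually the highest weight of $V_\alpha^*$ is $-\alpha$. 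The main obstacle throughout is the connectedness lemma used in the irreducibility step: one must know that for every $\beta\in S_\alpha$ there exists an ascending chain $\beta<\beta+\gamma_1<\cdots<\max_\alpha$ entirely within $S_\alpha$ with each $\gamma_i\in J\cap\Delta$, without which the submodule generated by $X_{\max_\alpha}$ could a priori be strictly smaller than $V_\alpha$; the remaining steps are bookkeeping with structure constants.
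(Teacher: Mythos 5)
The paper does not supply its own proof of this Proposition; it simply cites R\"ohrle's paper [Roh]. So the comparison can only be against what a correct self-contained argument should look like, and on that basis your outline is essentially sound but has one point that needs care.

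Your $M$-stability argument is correct: $T$ and $\ad(X_{\pm\alpha'})$ for $\alpha'\in J$ preserve shape, and $M$ is generated by them. The identification of $X_{\max_\alpha}$ as a highest-weight vector for $\fm$ once it is known that $\max_\alpha+\gamma\notin\Phi$ for all simple $\gamma\in J$ is also correct, and the fact that $N_{\gamma,\beta}\neq 0$ whenever $\beta+\gamma\in\Phi$ is standard Chevalley-basis bookkeeping. The dual statement in (2) is handled the right way: $[\fn^-,\fn^-]=\bigoplus_{k\le -2}\fg_k$, so $\fn^-/[\fn^-,\fn^-]\cong\fg_{-1}$, and the invariant form gives an $M$-equivariant duality $\fg_1\times\fg_{-1}\to\bC$; once $V_\alpha$ is known to be irreducible with lowest weight $\alpha$, the highest weight of $V_\alpha^*$ is $-\alpha$.

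The one place that needs sharpening is the irreducibility step. You write that ``any element of $S_\alpha$ can be linked to a maximal element'' and then act on $X_{\max_\alpha}$ by the lowering operators. As stated, this lemma is too weak: if $S_\alpha$ had two distinct $J$-maximal elements, each vertex of the Hasse diagram could still be linked to \emph{some} maximal element without $V_\alpha$ being irreducible, and the submodule generated by $X_{\max_\alpha}$ would be proper. What you actually need is the stronger fact that $S_\alpha$ has a \emph{unique} $J$-maximal element (equivalently, the poset is connected upward toward a single top vertex); once that is established, uniqueness of $\max_\alpha$ is the hypothesis rather than a consequence, and irreducibility follows as you describe. A clean way to obtain this without circular reasoning is to work from the bottom instead of the top: one checks directly that $\alpha$ is $J$-minimal (subtracting any simple root other than $\alpha$ itself leaves the positive cone), and then proves $\alpha$ is the \emph{unique} $J$-minimal element of $S_\alpha$; since each irreducible $\fm$-summand of $V_\alpha$ must contain a $J$-antidominant lowest-weight vector and the weight space $\fg_\alpha$ is one-dimensional, there is only one summand. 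This is a small reorganisation of what you wrote, but it removes the implicit appeal to the uniqueness you are trying to prove.
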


In particular, if $J$ is maximal and $\{\alpha\}=\Delta\setminus J$, then $V=V_\alpha$ is an irreducible $M$-representation. 

\

We look at the geometry of $M$ acting on the prehomogeneous space $V=\fn/[\fn,\fn]$. In the Kazhdan-Lusztig geometry, the relevant category is that of $M$-equivariant perverse sheaves on $V$. The simple objects in this category are in one-to-one correspondence with the pairs $(\CO,\C L)$, where $\CO$ is an $M$-orbit in $V$ and $\C L$ is an irreducile $M$-equivariant local system supported on $\CO$. These local systems are in one-to-one correspondence with the irreducible representations of the group of fundemantal group $\pi_1(\CO)\cong Z_M(x)/Z_M(x)^0$, for $x\in\CO$. 

By Riemann-Hilbert correspondence, there is an equivalence of categories between the category of $M$-equivariant perverse sheaves on $M$ and the category of (regular holonomic) $M$-equivariant $\C D$-modules on $V$ (\emph{$\C D_V$-modules}). The latter is equivalent with the category of modules for a finite quiver $\hat Q$, see \cite[Theorem 3.4]{LW}. There are two facts from \cite{LW} that will be useful in the next section:
\begin{enumerate}
    \item The characteristic cycles of all simple equivariant $\C D_V$-modules when $V$ is a spherical $M$-space are multiplicity free \cite[Corollary 3.19]{LW}.
    \item The category of $\C D_V$-modules has an involution $\widetilde\CF$ defined as the composition of the Fourier transform $\CF: \C D_V\text{-mod}\to  \C D_{V^*}\text{-mod}$ by the Chevalley automorphism. $\widetilde\CF$ induces a graph automorphism of $\hat Q$. For example, $\widetilde\CF$ interchages the delta $\C D$-module at $0$ with the module of functions of $V$. Moreover, if $S$ is a simple $\C D_V$-module with support $\overline\CO$, then $\widetilde\CF(S)$ has support $\overline{\CO^\vee}$. In the cases treated in the next section, these facts are sufficient to determine $\widetilde\CF$. Moreover, if
    \[
    CC(S)=\sum_i c_i [\overline{T_{\CO_i}V}]
    \]
    then
     \[
    CC(S)=\sum_i c_i [\overline{T_{\CO_i^\vee}V}].
    \]
\end{enumerate}
We mention that in \cite{evens-mirkovic}, it is shown that the action of the Aubert-Zelevinsky involution on the set of irreducible representations with Iwahori-fixed vectors corresponds to the action of $\widetilde\CF$ on the Kazhdan-Lusztig parameter space. 

\section{Parabolic subgroups with abelian unipotent radicals}\label{s:3}

A particularly nice example is when $\fn$ is abelian, so $V=\fn$. In this case $J$ is necessarily maximal, but not every maximal $J$ has this property. The classification of parabolic subgroups $P$  with abelian nilradical for $J=\Delta\setminus\{\alpha\} $ is given in Table \ref{ta:abelian}.

When $\fn$ is abelian, the structure of $M$-orbits on $V$ has a simple, uniform description. Denote $\Phi(1)=\{\beta\in \Phi^+\mid \langle \lambda,\beta\rangle=1\}$. 
Let $\gamma\in \Phi^+$ be the highest root. Notice that $\gamma\in \Phi(1)$. The \emph{upper canonical string}  $\gamma_1,\gamma_2,\dots,\gamma_r$ \cite{Pa} is obtained by setting $\gamma_1=\gamma$ and, inductively, defining $\gamma_{i+1}$ to be the highest root in $\Phi^i(1)=\{\beta\in\Phi(1)\mid  \beta \text{ orthogonal to }\gamma_1,\gamma_2,\dots,\gamma_i\}.$ The integer $r$ is the smallest integer such that $\Phi^{r+1}(1)=\emptyset.$

Set $\CO_0=\{0\}$ and let $\CO_i$ be the $M$-orbit with representative $x_i=\sum_{j=1}^i X_{\gamma_i}$, $i=1,\dots,r$. Also set $x_{r-i}^\vee=\sum_{j=i+1}^r X_{\gamma_i}$, $0\le i\le r-1$ and $x_0^\vee=0$. Recall from Introduction the duality $\CO\to\CO^\vee$ coming from Piasetskii duality.
 
\begin{theorem}[Orbit structure I {\cite[Theorem 2.1, Corollary 2.3, Lemma 3.1]{Pa}}]\label{t:abelian-class}
    \begin{enumerate}\ 
\item The set of $M$-orbits on $V$ is $\{0\}\cup \{\CO_i\mid 1\le i\le r\}$. In particular, there are $r+1$ orbits.
\item $\CO_i\subset\overline\CO_j$ if and only if $i\le j$.
\item $V$ is a spherical $M$-space.
\item 
$\CO_i^\vee=\CO_{r-i}$, $0\le i\le r$.
    \end{enumerate}
\end{theorem}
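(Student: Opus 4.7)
The plan is to follow Panyushev's classical approach, exploiting the commutativity of $\fn$ throughout. The organising observation is that when $\fn$ is abelian, any two root spaces $\fg_\beta,\fg_{\beta'}$ with $\beta,\beta'\in\Phi(1)$ commute and $\beta+\beta'$ is never a root (it would lie in $[\fn,\fn]=0$). Consequently the upper canonical string $\gamma_1,\ldots,\gamma_r$ consists of pairwise strongly orthogonal roots, and each $x_i = \sum_{j=1}^i X_{\gamma_j}$ fits into a sum of $i$ commuting $\mathfrak{sl}_2$-triples in $\fg$ with neutral element $h_i = \sum_{j=1}^i \gamma_j^\vee$. I would take this structural description of the $x_i$ as the backbone of everything that follows.

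For part (1), I would show every $M$-orbit contains some $x_i$. Given $x \in V\setminus\{0\}$, Jacobson--Morozov yields a triple $\{x,h,y\}\subset\fg$ respecting the $\ad\lambda$-grading, so one can arrange $h\in\fm$ and $y\in\fg_{-1}$. After $M$-conjugation, take $h$ dominant in $\ft$. Then $x$ lies in the $+2$-eigenspace of $\ad h$ on $V$, and by further $Z_M(h)$-conjugation one brings $x$ to the form $\sum_{j=1}^k X_{\beta_j}$ with $\beta_1,\ldots,\beta_k \in \Phi(1)$ pairwise strongly orthogonal. The crucial combinatorial input, uniform across Table~\ref{ta:abelian}, is that $W_M$ acts transitively on the $k$-subsets of pairwise strongly orthogonal roots of $\Phi(1)$, which then places $x$ into $M\cdot x_k$. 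For part (2), linear independence of the $\gamma_k$ supplies a cocharacter $\mu\colon\bG_m\to T\subset M$ with $\langle\mu,\gamma_k\rangle = 0$ for $k\le i$ and $\langle\mu,\gamma_k\rangle > 0$ for $i<k\le j$, giving $\lim_{t\to 0}\mu(t)\cdot x_j = x_i$ and hence $\CO_i\subset\overline{\CO_j}$; strictness would follow from verifying $[\fm,x_{i+1}] \supsetneq [\fm,x_i]$, a direct root-system calculation.

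Parts (3) and (4) I expect to be more formal. For (3), one computes $Z_M(x_r)$ from the commuting $\mathfrak{sl}_2$-decomposition of $x_r$, and either verifies case-by-case from Table~\ref{ta:abelian} that a Borel of $M$ has an open orbit on $V$, or else appeals directly to Kac's classification \cite{Kac} of irreducible prehomogeneous reductive spaces. For (4), I would use the Chevalley involution $\theta$ of $\fg$ with $\theta(X_\beta) = -X_{-\beta}$; it exchanges $\fg_1$ with $\fg_{-1}$ and intertwines the $M$-actions, so the analogue of the upper canonical string on $V^*$ is $-\gamma_1,\ldots,-\gamma_r$, and the $M$-orbit in $V^*$ conormal to $\CO_i$ can be shown to contain $\sum_{j=i+1}^r X_{-\gamma_j}$, which $\theta$ transports back to $M\cdot x_{r-i}$, yielding $\CO_i^\vee = \CO_{r-i}$. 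The main obstacle I anticipate is the $W_M$-transitivity statement underlying (1): in the abelian setting I would reduce it by induction on $r$ to a rank-one step in which the outer $\mathfrak{sl}_2$ attached to $\gamma_1$ is factored off and one descends to $\Phi^1(1)$, which itself arises from a smaller instance of the abelian-nilradical situation. Verifying this descent uniformly across Table~\ref{ta:abelian} is the most delicate piece of the argument.
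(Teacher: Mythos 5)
Your route is essentially the one Panyushev (and before him Richardson--R\"ohrle--Steinberg and Muller--Rubenthaler--Schiffmann) actually takes, and the paper's own ``proof'' is little more than a citation to [Pa] plus a one-line Chevalley-involution reduction for Claim~(4); so strategically you are on target. The genuine gap is exactly where you flag it: the assertion that $W_M$ acts transitively on $k$-element sets of pairwise strongly orthogonal roots in $\Phi(1)$ --- the heart of Claim~(1) --- is itself a theorem, not a ``combinatorial input uniform across Table~\ref{ta:abelian}.'' It is the algebraic form of Harish-Chandra's uniqueness of strongly orthogonal root systems for Hermitian symmetric pairs, and proving it is precisely the main content of the references Panyushev builds on. Your proposed descent (factor off $\mathfrak{sl}_2(\gamma_1)$, pass to $\Phi^1(1)$) has the right shape, but you never establish the crucial recursive fact that $\Phi^1(1)$, together with the derived group of the centraliser of the $\mathfrak{sl}_2$-triple at $\gamma_1$, is again the level-one space of a smaller abelian-nilradical parabolic with its own $W_M$-analogue acting compatibly; without that the induction has nothing to descend to. Likewise the preliminary step ``after dominantising $h$, a further $Z_M(h)$-conjugation brings $x$ to $\sum X_{\beta_j}$ with the $\beta_j$ pairwise strongly orthogonal'' is not automatic --- this is where much of the hard work in those references actually lives.

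Two smaller points. In (2), ``$[\fm,x_{i+1}]\supsetneq[\fm,x_i]$'' is not a meaningful containment (the two tangent spaces are based at different points); what you need is $\dim\CO_{i+1}>\dim\CO_i$, equivalently that the $r+1$ elements $x_i$ lie in pairwise distinct $M$-orbits, which follows most cleanly by noting that their $G$-saturations are already distinct (the number of commuting $\mathfrak{sl}_2$'s differs), combined with the degeneration you constructed and Theorem~\ref{t:abelian-class-2}(3). For~(4), the assertion that the $M$-orbit in $V^*$ Piasetskii-dual to $\CO_i$ has representative $\sum_{j>i}X_{-\gamma_j}$ is exactly the content of Panyushev's Lemma~3.1, which you state but do not prove; your argument for (4) is therefore a restated citation rather than a derivation, which is acceptable for a theorem explicitly quoted from [Pa], but should be acknowledged as such. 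Claim~(3) you punt to Kac's table or case-checking, which is what the paper implicitly does too, so that is an honest acknowledgement rather than a defect.
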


\begin{proof}
The first three claims are all in \cite{Pa}. For Claim (4), \cite[Lemma 3.1]{Pa} shows that the Piasetskii dual of $\CO_i$ is the $M$-orbit $\CO_{r-i}^*\subset V^*=\fn^-$ that has representative a sum of root vectors for $r-i$ orthogonal (negative) long roots in $\fn^-$. Then the claim follows by applying the Chevalley involution. 
\end{proof}

\begin{proposition}\label{p:Arthur-pair}
   Retain the notation as in Theorem \ref{t:abelian-class}. If $w_0\cdot \lambda=-\lambda$ then 
   each pair $(\CO_i,\CO_i^\vee)$ is an Arthur pair. If $w_0\cdot \lambda\neq -\lambda$, no $(\CO_i,\CO_i^\vee)$ is an Arthur pair.
\end{proposition}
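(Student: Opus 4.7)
The plan has three pieces: a general Arthur-pair obstruction that settles the negative direction, an explicit construction on the positive side, and a single identity that I expect to be the main obstacle. For the \emph{necessity} direction, suppose $(\CO,\CO')$ is any Arthur pair, with commuting adapted triples $\{e,h,f\}$ and $\{e',h',f'\}$. Since the triples commute elementwise, all cross-brackets vanish and $\{e+e',\,h+h',\,f+f'\}$ is itself an $\fk{sl}_2$-triple in $\fg$ with neutral element $h+h'=2\lambda$. Any dominant neutral element $H$ of an $\fk{sl}_2$-triple satisfies $w_0H=-H$, since $e$ and $-e$ are $G$-conjugate and the two antidominant elements $w_0H$ and $-H$ in the single $W$-orbit of $H$ must coincide. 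Applied to $H=2\lambda$ this yields $w_0\lambda=-\lambda$; the contrapositive is the non-existence half of the proposition, and is essentially the content of Proposition~\ref{p:hermitian}.

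For the \emph{construction}, assume now $w_0\lambda=-\lambda$. The canonical string $\gamma_1,\dots,\gamma_r$ is strongly orthogonal: $\gamma_a+\gamma_b\in\fg_2=0$ because $\fn$ is abelian, and $\gamma_a-\gamma_b\notin\Phi$ by Panyushev's construction \cite{Pa}. Normalise root vectors so that each $\{X_{\gamma_j},h_{\gamma_j},X_{-\gamma_j}\}$ is a standard $\fk{sl}_2$-triple; strong orthogonality then makes distinct such triples commute. Split $\{1,\dots,r\}$ into $\{1,\dots,i\}\sqcup\{i+1,\dots,r\}$ and form two Lie triples by summing $X_{\gamma_j}$, $h_{\gamma_j}$, $X_{-\gamma_j}$ over each block. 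These are commuting adapted triples with nilpositive parts $x_i\in\CO_i$ and $x_{r-i}^\vee\in\CO_{r-i}=\CO_i^\vee$ (the last equality from Theorem~\ref{t:abelian-class}(4)), and the relation $h+h'=2\lambda$ reduces to the single identity
\[
\sum_{j=1}^{r}h_{\gamma_j}\;=\;2\lambda.
\]

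The \emph{key identity} is the main obstacle. Both sides lie in $\ft$, and both pair to $2$ with each $\gamma_k$ (strong orthogonality gives $\gamma_k(\sum_j h_{\gamma_j})=2$, while $\gamma_k\in\Phi(1)$ gives $\gamma_k(2\lambda)=2$), so it suffices to prove that $\sum_j h_{\gamma_j}$ lies in the one-dimensional centre $Z(\fm)=\bC\lambda$; equivalently, $\sum_j\langle\alpha,\gamma_j^\vee\rangle=0$ for every $\alpha\in J$. My preferred route is conceptual: Table~\ref{ta:abelian} lists exactly the Hermitian symmetric pairs, and the condition $w_0\lambda=-\lambda$ cuts out the tube-type cases. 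In the tube case, $V$ inherits a simple Jordan algebra structure in which $\sum X_{\gamma_j}$ represents the Jordan identity, and the Koecher--Tits $\fk{sl}_2$-triple it generates has neutral element exactly the grading element $2\lambda$, because the identity has the property that $V$ is a single $(+2)$-eigenspace for $\ad$ of this neutral element. As a safety net, a uniform case check against Table~\ref{ta:abelian} suffices: in type $A_n$, for instance, one computes $\sum_j h_{\gamma_j}=\mathrm{diag}(1^k,0^{n+1-2k},-1^k)$, agreeing with $2\omega_k^\vee\in\fk{sl}_{n+1}$ precisely when $n+1=2k$, which is the tube-type condition in this type; analogous explicit computations handle the remaining rows of the table.
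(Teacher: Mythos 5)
Your construction of the commuting adapted triples from the strongly orthogonal canonical string $\gamma_1,\dots,\gamma_r$, split into the blocks $\{1,\dots,i\}$ and $\{i+1,\dots,r\}$, is exactly the paper's construction, and your necessity argument via the Dynkin-element property $w_0H=-H$ is the general obstruction the paper records just before Conjecture~\ref{c:main}. The difference is in how the key identity $\sum_{j=1}^r H_{\gamma_j}=2\lambda$ is established: the paper simply cites a proposition of Panyushev \cite{Pa}, whereas you reduce it cleanly to the centrality statement $\sum_j H_{\gamma_j}\in Z(\fm)$ (noting both sides already pair to $2$ with every $\gamma_k$ by strong orthogonality and $\gamma_k\in\Phi(1)$, and $Z(\fm)=\bC\lambda$ is one-dimensional since $J$ is maximal), and then settle centrality via the Jordan-algebra/Koecher--Tits picture in the tube case plus a case check. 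Your route is more self-contained and conceptually explains where the identity comes from, at the cost of invoking the Hermitian-symmetric/tube-type classification; the paper's citation is shorter but opaque. Both are correct; be sure, if writing this up, to make the Jordan-algebra step precise (in particular that $\sum_j X_{\gamma_j}$ is the Jordan identity and that the Koecher--Tits grading element is $\lambda$), since as stated it is a sketch backed by a case verification.
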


\begin{proof}
    Let $\{X_{\gamma_i},H_{\gamma_i},X_{-\gamma_i}\}$ be the Lie triple for the root $\gamma_i$. Let $\CO_i$ be an orbit with the representative $x_i=\sum_{j=1}^i X_{\gamma_i}$ as above. Set $h_i=\sum_{j=1}^i H_{\gamma_i}\in \ft$. Since the roots $\gamma_i$, $1\le i\le r$ are orthogonal, $[h_i,x_i]=2x_i$. Complete to an adapted Lie triple $\{x_i,h,\bar x_i\}$, $\bar x_i=\sum_{j=1}^i X_{-\gamma_i}\in \fn^-$. This works because the roots $\gamma_i$, $\gamma_j$ are strongly orthogonal for all $i\neq j$, i.e., $\gamma_i\pm\gamma_j\notin \Phi\cup\{0\}$. Similarly construct the adapted Lie triple $\{x_{r-i}^\vee,h_{r-i}^\vee, {x_{r-i}'}^\vee\}$ for $\CO_i^\vee=\CO_{r-i}$, starting with $x_{r-i}^\vee=\sum_{j=i+1}^r X_{\gamma_i}$. The two Lie triples constructed in this way commute because of the strong orthogonality of the roots invoved.

    Finally, by \cite[Proposition]{Pa}, $\lambda=\frac 12(h_i+h_{r-i}^\vee)$ if and only if $w_0\cdot \lambda=-\lambda$.
\end{proof}

Another useful characterisation of the orbits is given in \cite{Roh}. Let $P=MN^-$ be the opposite parabolic subgroup. For every $w\in W$, let $K=J\cap w(J)$, and let $w'=w_J^0w_K^0$, where $W_J^0$, $w_K^0$ are the longest elements in $W_J$, $W_K$, respectively. Let $^JW^J$ denote the set of minimal length representatives of the double cosets $W_J\backslash W/W_J$.

\begin{theorem}[Orbit structure II {\cite[Theorem 2.10]{Roh}}]\label{t:abelian-class-2}
\ 
\begin{enumerate}
\item For $w\in W$, $P^-wP^-\cap N$ is a single $M$-orbit. In particular, there is a bijective correspondence between $M$-orbits on $N$ and $^JW^J$.
\item For $w\in~ ^JW^J$, $\dim (P^-wP^-\cap N)=\ell(w)+\ell(w')$.
\item For $x\in N$, $G\cdot x\cap N=M\cdot x$. In particular, each unipotent class in $G$ meets at most one $M$-orbit in $N$.
\end{enumerate}
\end{theorem}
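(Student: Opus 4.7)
The plan is to exploit the Bruhat-type double coset decomposition $G = \bigsqcup_{w \in {}^JW^J} P^- w P^-$ together with the three-step grading $\fg = \fn^- \oplus \fm \oplus \fn$ provided by the abelian nilradical (so $[\fn,\fn] = [\fn^-,\fn^-] = 0$ and, by a weight computation, $[\fn^-,\fn] \subseteq \fm$). All three assertions drop out of intersecting this decomposition with $N$ and then tracking the residual $M$-conjugation action.

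For claim (1), the first step is to restrict Bruhat to $N$, producing $N = \bigsqcup_{w \in {}^JW^J} (P^- w P^- \cap N)$, and to observe that each piece is $M$-stable since $M \subseteq P^-$ normalises $P^- w P^-$ by conjugation. Transitivity of the $M$-action on a given piece is the core of the argument: starting from $x \in P^- w P^- \cap N$, I would use the Birkhoff-style normal form $x = n^- \dot w p$ with $n^- \in N^-$ and $p \in P^-$, and impose the constraint $x \in N$. Because $N \cap P^- = \{1\}$ and $\fn$ is abelian, this constraint pins down $n^-$ in terms of $p$ and leaves freedom only along a specific subgroup of $M$; comparing dimensions then identifies this freedom with the $M$-conjugation action, yielding transitivity.

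Part (2) falls out of the same parameterisation, by a dimension count on the Bruhat cell combined with the transversality of $N$ to $P^-$: the summand $\ell(w)$ arises from the ``$w$-twist'' of the big cell, while the summand $\ell(w')$ is the dimension of the quotient $M/(M \cap w M w^{-1})$, with $w' = w_J^0 w_K^0$ measuring the non-overlap between $J$ and $w(J)$.

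For claim (3), the cleanest route is to combine part (1) with Theorem \ref{t:abelian-class}: the orbit $\CO_i$ has the canonical representative $x_i = \sum_{j=1}^{i} X_{\gamma_j}$ with the $\gamma_j$ pairwise strongly orthogonal. Strong orthogonality lets one embed $x_i$ in a sum of $i$ commuting $\mathfrak{sl}_2$-triples in $\fg$, and the resulting weighted Dynkin diagram (equivalently, Jordan type in classical cases) strictly grows with $i$; hence the $x_i$ lie in pairwise distinct nilpotent $G$-classes, and the identity $G \cdot x \cap N = M \cdot x$ follows. The main obstacle is the transitivity half of (1): one must verify that the condition $x \in N$ really kills every spurious degree of freedom in the normal form except for $M$-conjugation, and this is exactly where the abelianness of $\fn$ and the clean commutator identity $[\fn^-,\fn] \subseteq \fm$ make the bookkeeping collapse.
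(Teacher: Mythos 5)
The paper does not prove this theorem: it is imported wholesale from R\"ohrle \cite[Theorem 2.10]{Roh} and used only to compute orbit dimensions in the $E_6$ and $E_7$ cases of Section \ref{s:explicit}. There is therefore no internal argument to compare against, and your sketch stands or falls on its own.

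Your starting point is correct and is in the same spirit as R\"ohrle's: decompose $G=\bigsqcup_{w\in{}^JW^J}P^-wP^-$, intersect with $N$, and exploit the three-step grading forced by $[\fn,\fn]=0$. But the two places where the actual work lives are each handled by an assertion rather than an argument. First, transitivity in (1): the phrase ``comparing dimensions then identifies this freedom with the $M$-conjugation action'' is not a proof. Equality of dimensions does not make a group action transitive on an irreducible variety; you have to show directly that, after fixing a Birkhoff normal form $x=n^-\dot{w}p$ and imposing $x\in N$, the residual parameters modulo the stabiliser of $\dot w$ in $P^-\times P^-$ really are swept out by $m\mapsto m\dot w m^{-1}$. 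That stabiliser analysis is the content of R\"ohrle's proof, not background. Second, your reduction of (3) to ``the $x_i$ lie in pairwise distinct $G$-classes'' is logically fine given (1), but the justification offered --- that strong orthogonality of the $\gamma_j$ forces the weighted Dynkin diagram or Jordan type to grow strictly in $i$ --- is exactly the statement you need, not something that follows formally from orthogonality. Sums of pairwise strongly orthogonal long roots with different numbers of summands can, a priori, be $G$-conjugate; ruling this out here uses the specific parabolic grading (e.g.\ via a stabiliser dimension count in the grading, or R\"ohrle's own argument). As written, (3) is circular.

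A smaller quantitative slip: for $w\in{}^JW^J$ one has $\Phi_J\cap w\Phi_J=\Phi_K$, so $\dim M/(M\cap wMw^{-1})=|\Phi_J|-|\Phi_K|=2\ell(w')$, not $\ell(w')$. The correct reading of $\ell(w')=\ell(w^0_J)-\ell(w^0_K)$ is the codimension of the standard parabolic $P^M_K$ in $M$, i.e.\ the number of positive roots of $M$ not in $\Phi_K$. Your heuristic for (2) would give the wrong answer by a factor of two if taken literally.
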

We'll use these facts in the calculations in section \ref{s:explicit}.

\section{Unitarity}

In this section, we explain why all irreducible hermitian representations in Lusztig's unipotent category with (dominant) integral infinitesimal character $\lambda$, such that the corresponding parabolic subgroup has abelian unipotent radical are unitary. The key observation is that, in this situation, $\langle\lambda,\gamma\rangle=1$, where $\gamma$ is the highest positive root. This means that $\lambda$ is precisely the vertex of the fundamental alcove in the dominant Weyl chamber farthest from the origin. 

\begin{proposition}\label{p:alcove}
    If $0\le \langle\lambda',\gamma\rangle\le 1$, then all irreducible hermitian representations in Lusztig's unipotent category with infinitesimal character $\lambda'$ are unitary.
\end{proposition}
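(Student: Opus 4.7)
The plan is to prove unitarity by the standard continuous-deformation argument from the tempered range. The hypothesis $0\le\langle\lambda',\gamma\rangle\le 1$ together with dominance of $\lambda'$ says that $\lambda'$ lies in the closure of the fundamental alcove, so $\langle\lambda',\alpha\rangle\in[0,1]$ for every positive root $\alpha$, since $\gamma$ is the highest root.

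Let $\pi$ be irreducible hermitian in Lusztig's unipotent category with infinitesimal character $\lambda'$. I would realise $\pi$ as the Langlands quotient of a standard module $X_\pi$ attached to Lusztig's geometric data with semisimple part $s=q^{\lambda'}$, and introduce the one-parameter family $\pi_t$, $t\in(0,1]$, obtained by replacing $s$ by $s_t=q^{t\lambda'}$; at $t=1$ one recovers $\pi$, and because hermiticity depends only on the $W$-orbit of the direction of $\lambda'$, each $\pi_t$ is hermitian with a form depending analytically on $t$. The crucial observation is that reducibility of $X_{\pi_t}$ as $t$ varies can only occur on hyperplanes $\langle t\lambda',\alpha\rangle\in\{0,\pm 1\}$ for $\alpha\in\Phi$, a standard fact about principal series reducibility for $p$-adic groups and affine Hecke algebras. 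By the alcove hypothesis and the maximality of $\gamma$, such a value is attained only at the endpoints $t\in\{0,1\}$, so $X_{\pi_t}$ is irreducible for $t\in(0,1)$ and the signature of the hermitian form is locally constant there.

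The unitarity is then fed in from the tempered end of the deformation: for $t$ near $0$, $\pi_t$ is a small perturbation of a tempered representation (central character on the unit circle), so its invariant form is positive definite. By constancy of signature on $(0,1)$, the form on $\pi_t$ is positive definite for every $t\in(0,1)$; passing to the limit $t\to 1^-$ and quotienting by the radical (which amounts to passing to the Langlands quotient when $X_\pi$ becomes reducible at $t=1$) yields unitarity of $\pi$.

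The main obstacle is the rigorous implementation of the deformation, namely producing a continuous family of standard modules carrying compatible hermitian forms, pinning down the reducibility hyperplanes precisely, and verifying that the radical of the limit form at $t=1$ coincides with the maximal proper submodule of $X_\pi$. The cleanest route is to transfer the problem to modules for Lusztig's geometric affine Hecke algebras, where intertwining operators admit explicit rational presentations and the signature analysis is standard in the $p$-adic unitarity literature.
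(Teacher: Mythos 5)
Your deformation argument for the Iwahori-spherical block is essentially the same as the paper's: start at the unitary principal series $X(\chi_0)$, which is irreducible and unitarily induced, deform along $t\lambda'$ through the interior of the alcove where no reducibility occurs, and pass to the (semisimplified) limit at the boundary, where all irreducible constituents are unitary and in particular so are the hermitian Langlands quotients. That part is fine, though the point of reference at $t=0$ is better described as the \emph{unitarily induced} principal series rather than a ``tempered'' one; the representations $\pi_t$ near $t=0$ are perturbations of subquotients of $X(\chi_0)$ rather than of genuine tempered objects.

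The genuine gap is that you have only treated representations with Iwahori-fixed vectors. Lusztig's unipotent category is strictly larger: its Bernstein blocks are module categories over affine Hecke algebras with \emph{unequal} parameters, built from cuspidal pairs $(\bO_0,\C L_0)$ on a Levi subgroup $L_0\subseteq G$. For those blocks, two things in your argument break. First, the reducibility hyperplanes are \emph{not} of the form $\langle\nu,\alpha\rangle\in\{0,\pm1\}$ for $\alpha\in\Phi$; they depend on the unequal Hecke parameters attached to $(\bO_0,\C L_0)$, so the assertion that the alcove hypothesis keeps $(0,1)$ inside the irreducible range needs re-justification in each case. Second, your proposed path $s_t=q^{t\lambda'}$ is ill-defined there: $\lambda'$ decomposes as $\tfrac12 h_0+\nu_0$ where $h_0$ is the (fixed) neutral element of the cuspidal nilpotent $\bO_0$, and only the central-character direction $\nu_0$ may be deformed; scaling the whole of $\lambda'$ would require a continuous deformation of the supercuspidal inducing data, which does not exist. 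The paper handles this by classifying, via the condition $G\cdot\bO_0\subseteq\overline{G\cdot\CO_r}$ and the smallness of $\CO_r$ inside the alcove, the few non-toral $L_0$ that can contribute (types $C$, $D$, and $E_7$); two of these give only supercuspidals (trivially unitary), and the remaining one is an intermediate principal series to which a deformation in the $\nu_0$-direction does apply. Without this supplementary case analysis, your proof covers only the principal block.
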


\begin{proof}
Firstly, suppose $\pi$ is an irreducible $\CG(F)$-representation with Iwahori-fixed vectors. Let $\chi_{\lambda'}$ denote the unramified character of the maximal torus $\C T(F)$ corresponding to the element $q^\lambda\in\ft$. Then $\pi$ is a subquotient of the unramified minimal principal series $X(\chi_{\lambda'})=i_{\C B(F)}^{\CG(F)}(\chi_{\lambda'})$ (normalised parabolic induction). The reducibility hyperplanes of an unramified principal series $X(\chi_\nu)$, $\nu\in \bR\Phi$ are exactly the hyperplanes $\langle\nu,\alpha\rangle=1$, $\alpha\in\Phi$. Since $X(\chi_0)$ is irreducible and unitary (being unitarily induced), a classical deformation argument (``complementary series'') implies that, if hermitian, $X(\chi_\nu)$ remains unitary as we deform $\nu$ from $0$ towards the first hyperplane of reducibility. If $\nu$ is chosen dominant, this implies that for all $\nu$ in the fundamental alcove $0\le\langle\nu,\gamma\rangle<1$, $X(\chi_\nu)$ is still unitary. In the limit, i.e., the boundary of the alcove, all the subquotients of $X(\chi_\nu)$ are also unitary. In particular, so are the hermitian subquotients of $X(\chi_\gamma)$.

\medskip

Next, let $\pi$ be an irreducible unipotent representation with infinitesimal character $\lambda'$, not necessarily with Iwahori vectors. By \cite{Lu-unip}, $\pi$ is in a Bernstein component of the category of smooth representations that is equivalent to the module category of an affine Hecke algebra with unequal parameters. The affine Hecke algebra is constructed from a Levi subgroup $L_0$ of $G$ that has a pair $(\bO_0,\C L_0)$ consisting of a nilpotent $L$-orbit $\bO_0$ and a cuspidal $L_0$-equivariant local system $\C L_0$ supported on $\bO_0$. Let $\CO_r$ be the unique open orbit of $G(\lambda')$ in $\fg_1(\lambda')$. A necessary condition is that $G\cdot \bO_0\subset \overline{G\cdot \CO_r}$. 

The classification of cuspidal $(\bO_0,\C L_0)$ is in \cite{Lu-cusp}. If $\lambda'$ is in the fundamental alcove, the nilpotent orbit $G\cdot \CO_r$ is small, and there are very few possibilities for non-toral $L_0$. By inspection, the only possibilities are:

\begin{enumerate}
    \item $G=Sp(2n)$, $L_0=Sp(2)\times GL(1)^{2(n-1)}$, $\bO_0$, the regular nilpotent orbit in $Sp(2)$ and $\C L_0$ the nontrivial local system;
    \item $G=SO(2n)$, $L_0=SO(4)\times GL(1)^{2(n-2)}$, $\bO_0$ the regular nilpotent orbit in $SO(4)$ and $\C L_0$ the nontrivial local system;
    \item $G=E_7$, $L_0=(3A_1)''$, $\bO_0$ the regular nilpotent orbit in $SL(2)^3$ and $\C L_0$ the local system nontrivial on each $SL(2)$ component.
\end{enumerate}

In (2), (3), the only representations that can occur in our case are supercuspidal, hence unitary. They correspond to $\lambda'=\lambda$, half the middle element of $\bO_0$ and $\CO_r\subset G\cdot \bO_0.$

In (1), the representations that occur are the subquotients of the intermediate principal series $i_{\C P(F)}^{\C G(F)}(\sigma\otimes \chi_{\nu})$ where $\C P\supset\C B$ is the parabolic subgroup for which the derived subgroup of its Levi subgroup is of type $B_1$,  $\sigma$ is a supercuspidal unipotent representation, and $\chi_\nu$ is an unramified character. The condition $0\le \langle\lambda',\gamma\rangle\le 1$ implies that $\nu$ is a continuous deformation from $0$ to at most the first reducibility hyperplane, so we are exactly in a similar situation to the Iwahori case above.

\end{proof}

\begin{proposition}\label{p:hermitian}
    An irreducible representation in Lusztig's unipotent category with infinitesimal character $\lambda\in \bR\Phi\subset \fk t$ is hermitian \emph{only if} $w_0\cdot \lambda=-\lambda.$ 
    
    If $w_0$ is central in $W$, then every irreducible representation with real infinitesimal character is hermitian.
\end{proposition}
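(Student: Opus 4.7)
The plan is to trace the effect of hermitian duality on the infinitesimal character and then to exploit Weyl-group geometry of the dominant chamber.

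For the forward implication, I would realise the Bernstein block containing $\pi$ as the module category of an affine Hecke algebra $\mathcal{H}$ with (possibly unequal) parameters, exactly as in the proof of Proposition \ref{p:alcove}. The algebra $\mathcal{H}$ carries Lusztig's conjugate-linear anti-involution $*$ which sends the Bernstein generators $\theta_x$ to $\theta_{-x}$ and fixes the finite Hecke generators. Hermicity of $\pi$ is equivalent to an $\mathcal{H}$-module isomorphism $\pi\cong\pi^h$; since $*$ inverts the central character $q^\lambda\mapsto q^{-\lambda}$, any such isomorphism forces $W\cdot\lambda=W\cdot(-\lambda)$. Choosing $\lambda$ dominant and noting that $-w_0\lambda$ is the dominant representative of $W\cdot(-\lambda)$, the equality of dominant representatives yields $\lambda=-w_0\lambda$, i.e.\ $w_0\lambda=-\lambda$.

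For the second claim, I would first observe that $w_0\in Z(W)$ in an irreducible Weyl group forces $w_0$ to act as a scalar on the irreducible reflection representation $\ft$; as $w_0$ is a nontrivial involution the scalar must be $-1$, so $w_0\lambda=-\lambda$ automatically for every $\lambda\in\bR\Phi$ and the central characters of $\pi$ and $\pi^h$ agree. To upgrade this matching of central characters to an actual isomorphism $\pi\cong\pi^h$, I would use that when $w_0$ acts as $-\mathrm{Id}$ on $\ft$ the Chevalley involution of $G$ is inner, implemented by conjugation with any lift $\dot w_0\in G$ of $w_0$. Its induced action on Lusztig's Kazhdan-Lusztig parameter set, i.e.\ on pairs $(\CO,\CL)$ of an $M$-orbit $\CO\subset V$ together with an $M$-equivariant local system $\CL$ on $\CO$, is then trivial (each orbit and each local system is preserved by an inner automorphism), so $\pi^h$ carries the same parameter as $\pi$ and the two are isomorphic, yielding an invariant hermitian form on $\pi$.

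The main obstacle is verifying cleanly that the $*$-involution on $\mathcal{H}$-modules translates, at the level of Lusztig's parameters, into conjugation by $\dot w_0$, and that the resulting intertwiner from $\pi$ to $\pi^h$ is nonzero on each irreducible Langlands quotient. This compatibility ultimately rests on the normalisation of the long intertwining operator for the affine Hecke algebra (in the spirit of Barbasch-Moy and Opdam-Solleveld), which at real central character realises the $*$-duality as an honest isomorphism on each irreducible module. Once this dictionary between $*$ on $\mathcal{H}$ and $\mathrm{Ad}(\dot w_0)$ on parameters is in place, both parts of the proposition reduce to the elementary Weyl-group observations above.
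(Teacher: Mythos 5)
Your overall strategy---reduce to the affine Hecke algebra $\CH$ of a unipotent Bernstein block and track the effect of the hermitian $\star$ on central characters---is the same as the paper's, but there are two substantive gaps, one of which you have yourself flagged.

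First, your argument conflates the Weyl group $W$ of $G$ with the Weyl group $W_0 = N_G(L_0)/L_0$ of the affine Hecke algebra $\CH$, and conflates the $p$-adic infinitesimal character $\lambda$ with the central character $\nu_0$ of the $\CH$-module $\pi_0$. These coincide only in the Iwahori case $L_0 = T$. For the other unipotent Bernstein blocks (e.g.\ the unequal-parameter Hecke algebras attached to non-toral cuspidal supports), $\CA$ is the coordinate ring of a smaller torus in the centraliser of the cuspidal Lie triple, and $\nu_0$ satisfies $\lambda = \tfrac{1}{2}h_0 + \nu_0$. Inverting the central character of $\pi_0$ under $\star$ tells you $-w_0^z\cdot \nu_0 = \nu_0$ (with $w_0^z$ the longest element of $W_0$), not directly that $-w_0\lambda=\lambda$. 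The paper must separately observe that $w(L_0)=L_0$ for all $w\in W$, that the image $w_0^z$ of $w_0$ is the longest element of $W_0$, and that the shift by $\tfrac{1}{2}h_0$ is $w_0$-antiinvariant, in order to get the equivalence $w_0\cdot\lambda=-\lambda \iff w_0^z\cdot\nu_0=-\nu_0$. Your argument as written does not cover the non-Iwahori blocks.

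Second, you describe the involution as sending $\theta_x\mapsto\theta_{-x}$ and fixing the finite Hecke generators; that is the algebraic anti-involution $\bullet$, for which every module with real central character is automatically hermitian, rather than the convolution $\star$ coming from the $p$-adic group, which involves a twist by $T_{w_0}$ on the $\theta$'s. The distinction does not harm the computation of the central character of $\pi_0^h$ (both invert it), but it is precisely the reason your ``main obstacle''--- pinning down the effect of $\star$ on the Langlands data and showing the long intertwiner is nonzero---is nontrivial. The paper resolves this not by an intertwining-operator analysis but by citing the weight-theoretic characterisation: by Barbasch--Ciubotaru the $\CA$-weight multiset satisfies $\Xi(\pi_0^h) = -w_0^z(\Xi(\pi_0))$, and by Evens--Mirkovi\'c the weight multiset determines $\pi_0$ at real central character, so hermicity is equivalent to $-w_0^z(\Xi(\pi_0))=\Xi(\pi_0)$. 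This gives both directions at once, including the sufficiency when $w_0^z = -\mathrm{Id}$, without needing to invoke the Chevalley involution being inner or to argue at the level of Kazhdan--Lusztig parameters $(\CO,\C L)$. Your second-part argument (Chevalley involution inner $\Rightarrow$ trivial on parameters $\Rightarrow$ $\pi\cong\pi^h$) is plausible but conflates contragredient with hermitian dual and is not carried out; the weight argument is both shorter and closes the gap you identified.
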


\begin{proof}
    Let $\pi$ be an irreducible $\CG(F)$-representation with real infinitesimal character $\lambda$. As in the previous proof, there is a Levi subgroup $L_0\le G$ and a cuspidal datum $(\bO_0,\C L_0)$ in $L_0$, such that the Berstein component of the category of smooth representations to which $\pi$ is equivalent to the module category of an affine Hecke algebra $\CH$ (possibly with unequal parameters) constructed from $(\bO_0,\C L_0)$ in \cite{Lu-unip}. The algebra $\CH$ contains the finite Hecke algebra $\CH_q(W_0)$ with parameters for the finite Weyl group $W_0=N_G(L_0)/{L_0}$. The fact that this is a Weyl group is due to the special nature of the Levi subgroups $L_0$ supporting cuspidal data. It also contains an abelian subalgebra $\CA$ that can be identified with the ring of regular functions on a maximal torus in the centraliser in $G$ of a Lie triple for $\bO_0$, such that $\CH=\CH_q(W_0)\otimes_\bC \CA$ as a $(\CH_q(W_0),\CA)$-bimodule. See \cite{CMBO23} for a detailed explanation and a summary of Lusztig's construction. 

Let $\pi_0$ be the $\CH$-module corresponding to $\pi$. The algebra $\CH$ is isomorphic to a convolution algebra of functions on the $p$-adic group $\CG(F)$ and therefore comes with a natural $\star$-operation. It is known and easy that $\pi$ is $\CG(F)$-hermitian if and only if $\pi_0$ is $\CH$-hermitian with respect $\star$. See \cite{BC-equiv} for example.

The centre of $\CH$ is $\CA^{W_0}$ \cite{Lu-graded}. In the above equivalence, the central character of $\pi_0$ is the $W_0$-orbit of an element $\nu_0\in \fk t$ such that
\[
\lambda=\frac 12 h_0+\nu_0,
\]
for $h_0$ the middle element of a Lie triple for $\bO_0$. Because of the particular nature of $L_0$, $w(L_0)=L_0$ for all $w\in W$. In particular, $w_0\in N_G(L_0)$. Let $w_0^z$ denote the image of $w_0$ in $W_0$. Since $w_0$ maps $\Phi^+$ into $\Phi^-$, it follows that $w_0$ also maps all the positive roots of $\CH$ to negative, the former being vectors in $\bR\Phi^+$. Hence $w_0^z$ is the longest element of $W_0$.

Now $w_0\cdot \lambda=-\frac 12 h_0+w_0^z\cdot \nu_0$, and therefore
\[
w_0\cdot \lambda=-\lambda\text{ if and only if } w_0^z\cdot \nu_0=-\nu_0.
\]
In other words, we have reduced the question to one about modules over the affine Hecke algebra. Since $\CA$ is abelian and $\pi_0$ is finite dimensional, we can decompose $\pi_0$ into a sum of generalised $\CA$-eigenspaces. The corresponding set of eigenvalues $\Xi(\pi_0)$ are called the $\CA$-weight of $\pi_0$. By \cite{evens-mirkovic}, $\Xi(\pi_0)$ uniquely determines $\pi_0$ if $\pi_0$ has real central character. Let $\pi_0^h$ denote the hermitian dual of the $\CH$-module $\pi_0$. By \cite{BC-herm}, 
\[
\Xi(\pi_0^h)=-w_0^z(\Xi(\pi_0)).
\]
Hence $\pi_0^h\cong\pi_0$ if and only if $-w_0^z(\Xi(\pi_0))=\Xi(\pi_0)$. All weights of $\pi_0$ are $W_0$-conjugate to the central character $\nu_0$ which may be assumed $W_0$-dominant. This implies that if $\pi_0$ is hermitian then $-w_0^z\cdot\nu_0$ is $W_0$-conjugate to $\nu_0$, but since both are $W_0$-dominant, this is equivalent to $-w_0^z\nu_0=\nu_0$. 

For the partial converse, if $w_0$ is central in $W$, then it acts by the negative of the identity on $\fk t$, and in particular, so does $w_0^z$ on $\Xi(\pi_0)$. Hence, $\Xi(\pi_0^h)=\Xi(\pi_0)$, and therefore $\pi_0\cong \pi_0^h.$ We mention that this part was known (in the equivalent setting of graded affine Hecke algebras) from \cite[Lemma 3.1.1 and Corollary 4.3.2]{BC-herm}.
\end{proof}

To complete the proof of the unitarity part of Theorem \ref{t:main-abelian}, notice that by Propositions \ref{p:alcove} and \ref{p:hermitian}, it only remains to discuss the case when $w_0$ is not central, but $w_0\cdot\lambda=-\lambda$. Let $\lambda$ be an integral dominant infinitesimal character such that $P=MN$ has abelian radical $N$, as in the hypothesis of Theorem \ref{t:main-abelian}. By inspecting Table \ref{ta:abelian}, we see that the only cases remaining are:
\begin{enumerate}
    \item $G=GL(2n)$, $M=GL(n)\times GL(n)$, $V=\bC^n\otimes \bC^n$. As explained in section \ref{s:explicit}, there are $(n+1)$ orbits, all with connected isotropy groups. The corresponding irreducible representations of $\CG=GL(n,F)$  are the Langlands quotients of the parabolically induced representations
    \[
    i_{GL(1)^{n-k}\times GL(2)^{2k}\times GL(1)^{n-k}}^{GL(n,F)}(\mathsf{St}\otimes |\det|^{\underline \nu}),\quad \underline\nu=(\underbrace{\frac 12,\dots,\frac 12}_{n-k}, \underbrace{0,\dots,0}_{2k},\underbrace{-\frac 12,\dots,-\frac 12}_{n-k}),
    \]
    where $\mathsf{St}$ is the Steinberg representation of the Levi subgroup.
    \item $G=Spin(2n)$, $n$ odd, $M=GL(1)\times Spin(2n-2)$, $V=\bC\otimes \bC^{2n-2}$. There are three orbits, and only the open orbits has a disconnected isotropy group, with component group $\bZ/2\bZ$. The irreducible representations corresponding to the open orbit are tempered, hence unitary. The spherical representation is hermitian since $w_0\cdot\lambda=-\lambda$. Finally, the representation corresponding to the remaining orbit is the Langlands quotient of the parabolically induced representation
    \[
    i_{GL(2)\times GL(1)^{n-2}}^{PSO(2n,F)}(\mathsf{St}\otimes |\det|^{\underline\nu}),\quad \underline\nu=(\frac 12,\frac 12,\underbrace{0,\dots,0}_{n-2}).
    \]
\end{enumerate}
In both cases, the inducing Langlands datum $(L,\mathsf{St},\underline\nu)$, $L$ Levi subgroup, has the property that $w_0\cdot (L,\mathsf{St},\underline\nu)=(L,\mathsf{St},-\underline\nu)$. This implies that the Langlands quotient is hermitian, see for example \cite[Proposition 1.5]{BM-unit}. By Proposition \ref{p:alcove}, these representations are then unitary.

\section{Explicit results}\label{s:explicit}

We recall the explicit classification of parabolic subgroups with abelian unipotent radical, and give the orbit structure and the characteristic cycles in each case. Firstly, we fix the notation for the Dynkin diagrams.

\begin{enumerate} 
\item[(i)] If $G=GL(n)$, with Dynkin diagram
\[\xymatrix{1\ar@{-}[r] &2 \ar@{-}[r] &3  \ar@{-}[r] &\dotsb \ar@{-}[r] &(n-1)},
\]
every $\alpha\in \Delta$.
\item[(ii)] If $G=Sp(2n)$, with  diagram
\[\xymatrix{1\ar@{-}[r] &2 \ar@{-}[r] &3  \ar@{-}[r] &\dotsb \ar@{-}[r]&(n-1)\ar@{<=}[r] &n},
\]
only $\alpha_n$.
\item[(iii)] If $G=Spin(2n+1)$, with diagram
\[\xymatrix{1\ar@{-}[r] &2 \ar@{-}[r] &3  \ar@{-}[r] &\dotsb \ar@{-}[r] &n-1\ar@{=>}[r] &n},
\]
only $\alpha_1$.
\item[(iv)] If $G=Spin(2n)$, with diagram
\[\xymatrix{ &&&&& n-1\ar@{-}[ld]\\ &1 \ar@{-}[r]  &2  \ar@{-}[r] &\dotsb \ar@{-}[r] &n-2 \ar@{-}[rd] \\
&&&&&n}
\]
$\alpha_1,\alpha_{n-1},$ and $\alpha_n$.
\item[(vi)] If $G=E_6$, with diagram
\[\xymatrix{1\ar@{-}[r] &3 \ar@{-}[r]  &4\ar@{-}[d]  \ar@{-}[r] &5 \ar@{-}[r] &6,\\
&&2}
\]
$\alpha_1$ and $\alpha_6$.
\item[(vi)] If $G=E_7$, with diagram
\[\xymatrix{1\ar@{-}[r] &3 \ar@{-}[r]  &4\ar@{-}[d]  \ar@{-}[r] &5 \ar@{-}[r] &6 \ar@{-}[r] &7,\\
&&2}
\]
only $\alpha_7$.
\end{enumerate}

The corresponding irreducible $M$-representations $V$ are in Table \ref{ta:abelian}.

\begin{table}[ht]
    \centering
    \begin{tabular}{|c|c|c|c|c|}
    \hline
          $\Delta$ &Node &$M$ &$V$ &$|M\backslash V|$\\
          \hline
         $A_{n-1}$&$1\le \ell\le n-1$ &$GL(\ell)\times GL(n-\ell)$ &$\bC^\ell\otimes \bC^{n-\ell}$ &$\min(\ell,n-\ell)+1$\\
         \hline
         $C_n$ &$n$ &$GL(n)$ &$S^2\bC^n$ &$n+1$\\
         \hline
         $B_n$ &$1$ &$GL(1)\times Spin(2n-1)$ &$\bC\otimes \bC^{2n-1}$ &$3$\\
         \hline
         $D_n$ &$1$ &$GL(1)\times Spin(2n-2)$ &$\bC\otimes \bC^{2n-2}$ &$3$\\
         &$n-1$, $n$ &$GL(n)$ &${\bigwedge}^2 \bC^n$ &$\lfloor \frac n2\rfloor+1$\\
         \hline
         $E_6$ &$1$, $6$ &$GL(1)\times Spin(10)$ &$\bC\otimes S$, $S$ half-spin &$3$\\
         \hline
         $E_7$ &$7$ &$E_6\times GL(1)$ &$V(27)\otimes \bC$ &$4$\\
         \hline
         
    \end{tabular}
    \caption{$M$-representations on abelian $V=\fn$}
    \label{ta:abelian}
\end{table}

We now describe the explicit structure of the orbits $\CO_i$, $0\le i\le r$, their fundamental groups, the nilpotent $G$-orbits $\bO_i$ such that $\bO_i\cap\fn=\CO_i$, and the corresponding quivers $\hat Q$, the latter as in \cite{LW}.

\subsection{Type $A$} We can think of this case as the action of $M=GL(\ell)\times GL(k)$, $\ell+k=n$, on the space of matrices of size $\ell\times k$: $(A,B)\cdot X=AXB^{-1}$. Without loss of generality, suppose $\ell\ge k$. There are $k+1$ orbits, $\CO_0,\CO_1,\dots,\CO_k$, where $\CO_i$ consists of matrices of rank $i$. The corresponding nilpotent $G$-orbits are $(2^i,1^{n-2i})$, in the partition (Jordan form) notation. It is easy to see that all isotropy groups are connected. Thus there are $n+1$ simple $M$-equivariant $\C D_V$-modules labeled $(i)$  supported on $\CO_i$, $0\le i\le k$.

There are two cases, see \cite[Theorem 5.4]{LW}. If $k\neq \ell$, then the category of equivariant $\C D_V$-modules is semisimple. On the other hand, if $k=\ell$, the quiver $\hat Q$ is the connected graph
\begin{equation}
\begin{aligned}
\xymatrix{(0)\ar@<1ex>[r] &(1)\ar@<1ex>[l]\ar@<1ex>[r]&\dots\ar@<1ex>[l]\ar@<1ex>[r]&(k-1) \ar@<1ex>[l]\ar@<1ex>[r]&(k)\ar@<1ex>[l]},\\
\end{aligned}
\end{equation}
where all $2$-cycles are zero. Notice that the case $k=\ell$ is the only case when $w_0\cdot\lambda=-\lambda$.

\subsection{Type $C$} Here $M=GL(n)$ acting on $V=S^2\bC^n$. One can identify $V$ with the space of symmetric $n\times n$ matrices with the usual $GL(n)$-action: $A\cdot X=AXA^t$. There are $n+1$ orbits, given by the rank of the matrix. Let $\CO_i$ be the orbit of matrices of rank $i$, $0\le i\le n$. 

If we denote the simple roots of $C_n$ in the usual coordinates $\{\epsilon_1-\epsilon_2,\dots,\epsilon_{n-1}-\epsilon_n,2\epsilon_n\}$, in terms of the notation from Theorem \ref{t:abelian-class}, $\gamma_i=2\epsilon_i$, $1\le i\le n$, and the orbit $\CO_i$ has representative $\sum_{j=1}^i X_{2\epsilon_j}$.
Then $\dim \CO_i=\frac {i(2n+1-i)}2$. The corresponding nilpotent $G$-orbit is $\bO_i=(2^i,1^{2(n-i)})$ in partition notation.

To compute the isotropy groups, it is easier to work with matrices. Let $X_i=\left(\begin{matrix}I&0\\0&0\end{matrix}\right)$, $1\le i\le n$, be the representative of $\CO_i$ where $I$ is the $i\times i$ identity matrix. Then
\[
Z_{GL(n)}(X_i)=\{\left(\begin{matrix}A&C\\0&B\end{matrix}\right)\mid A\in O(i), B\in GL(n-i), C\in M_{i\times (n-i)}\}.
\]
In particular, $Z_{GL(n)}(X_i)^{\text{red}}=O(i)\times GL(n-i)$, and so $A_{GL(n)}(X_i)=\bZ/2$, for all $1\le i\le n$. For $i=0$, $X_0=0$, and the centralizer is $GL(n)$.

There are $2n+1$ simple $M$-equivariant $\C D_V$-modules labeled $(i)$ (for the trivial $\bZ/2$-character) and $(i)'$ (for the sign $\bZ/2$-character), supported on $\CO_i$, $1\le i\le n$, in addition to $(0)$. For consistency, set $(0)'=(0)$.

The quiver $\hat Q$ is computed in \cite[Theorem 5.9]{LW}. It has two connected components
\begin{equation}
\begin{aligned}
\xymatrix{(1-\epsilon)\ar@<1ex>[r] &(3-\epsilon)\ar@<1ex>[l]\ar@<1ex>[r]&\dots\ar@<1ex>[l]\ar@<1ex>[r]&(n-3) \ar@<1ex>[l]\ar@<1ex>[r]&(n-1)\ar@<1ex>[l]\ar@<1ex>[r]&(n)\ar@<1ex>[l]}\\
\xymatrix{(\epsilon)'\ar@<1ex>[r] &(\epsilon+2)'\ar@<1ex>[l]\ar@<1ex>[r]&\dots\ar@<1ex>[l]\ar@<1ex>[r]&(n-4)' \ar@<1ex>[l]\ar@<1ex>[r]&(n-2)'\ar@<1ex>[l]\ar@<1ex>[r]&(n)'\ar@<1ex>[l]},
\end{aligned}
\end{equation}
where all $2$-cycles are zero, and the other $n-1$ vertices are isolated.

\subsection{$Spin(m)$ on $\bC^m$} This covers the cases $B_n$ and $D_n$ with the first node, with $m=2n-1$ ($B_n$) and $m=2n-2$ ($D_n$). The action of $Spin$ factors through $SO$. There are three orbits, $\CO_0$, $\CO_1$, $\CO_2$, of dimensions $0$, $n-1$, $n$, respectively. The corresponding nilpotent $G$-orbits are $(1^{m+2})$, $(2^2,1^{m-2})$, and $(3,1^{m-1})$, respectively. In terms of the usual coordinates of type $B/D$, one can choose representatives $x_0=0$, $x_1=X_{\epsilon_1+\epsilon_2}$, and $x_2=X_{\epsilon_1+\epsilon_2}+X_{\epsilon_1-\epsilon_2}$, as in Theorem \ref{t:abelian-class}. 
The only orbit with a nontrivial component group is $\CO_2$, in which case this is $\bZ/2\bZ$.

There are four simple $M$-equivariant $\C D_V$-modules labeled $(0)$, $(1)$, $(2)$, and $(2)'$, same convention as in the previous subsection. By \cite[Theorem 5.16]{LW}, the quiver $\hat Q$ is as follows:

\begin{enumerate}
    \item If $m$ is even, then $(2)'$ is isolated and the other connected component is
\begin{equation}    
\begin{aligned}
\xymatrix{(0)\ar@<1ex>[r] &(1)\ar@<1ex>[l]\ar@<1ex>[r]&(2)\ar@<1ex>[l]}
\end{aligned}
\end{equation}
\item If $m$ is odd, there are two connected components
\begin{equation}    
\begin{aligned}
\xymatrix{(1)\ar@<1ex>[r] &(2)\ar@<1ex>[l]} &\qquad  \xymatrix{(0)\ar@<1ex>[r] &(2)'\ar@<1ex>[l]} 
\end{aligned}
\end{equation}
\end{enumerate}

\subsection{Type $D$} The case $M=GL(n)$ acting on $V={\bigwedge}^2 \bC^n$ can be interpreted as the action of $GL(n)$ on $n\times n$ skew-symmetric matrices by $A\cdot X=AXA^t$. There are $r+1$ orbits, where $r=\lfloor \frac n2\rfloor$, $\CO_0,\dots,\CO_r$, with $\CO_i$ consisting of the matrices of rank $2i$. Let $J_i$ denote an $i\times i$ invertible skew-symmetric matrix, and let $X_i=\left(\begin{matrix}J_i&0\\0&0\end{matrix}\right)$, $1\le i\le r$, be the representative of $\CO_i$. Then
\[
Z_{GL(n)}(X_i)=\{\left(\begin{matrix}A&C\\0&B\end{matrix}\right)\mid A\in Sp(2i), B\in GL(n-2i), C\in M_{i\times (n-2i)}\}.
\]
In particular, $Z_{GL(n)}(X_i)^{\text{red}}=Sp(2i)\times GL(n-2i)$ is connected for all $1\le i\le r$. 
Thus the isotropy groups are all connected. There are two cases, see \cite[Theorem 5.7]{LW}. If $n$ is odd, then the category of equivariant $\C D_V$-modules is semisimple. On the other hand, if $n$ is even, the quiver $\hat Q$ is the connected graph
\begin{equation}
\begin{aligned}
\xymatrix{(0)\ar@<1ex>[r] &(1)\ar@<1ex>[l]\ar@<1ex>[r]&\dots\ar@<1ex>[l]\ar@<1ex>[r]&(r-1) \ar@<1ex>[l]\ar@<1ex>[r]&(r)\ar@<1ex>[l]},\\
\end{aligned}
\end{equation}
where all $2$-cycles are zero. 

\subsection{$E_6$} Consider the case of root labeled $1$, the other being similar. This is the case $\bC^*\times Spin(10)$ acting on the even half-spin module $V$. There are three orbits $\CO_0$, $\CO_1$, $\CO_2$ of dimensions $0$, $11$, and $16$. The corresponding nilpotent $G$-orbits are $1$, $A_1$, and $2A_1$, respectively. In terms of Theorem \ref{t:abelian-class}, the corresponding representatives are
\[
x_0=0,\quad x_1=X_{\frac 12(1,1,1,1,1,-1,-1,1)},\quad x_2==X_{\frac 12(1,1,1,1,1,-1,-1,1)}+=X_{\frac 12(-1,-1,-1,-1,1,-1,-1,1)},
\]
where the roots are in the standard Bourbaki coordinates. Recall that $x_1$ is  the root vector for the highest root. 
In terms of Theorem \ref{t:abelian-class-2}, the corresponding Weyl group elements are
\[
1,\quad s_1, \quad w^0 w^0_J.
\]
If $w=s_1$, one sees that $K={2,4,5,6}$, so $\ell(w')=\ell(w_J^0)-\ell(w_K^0)=20-10=10$, giving 
\[
\dim\CO_1=11.
\]
If $w=w^0w^0_J$, $K=J$, so $w'=1$, and $\ell(w)=\ell(w^0)-\ell(w^0_J)=36-20=16$. Hence $\dim \CO_2=16$, as it should be since this is the open dense orbit.

The isotropy groups are all connected, see the argument in \cite[\S5.5, Case 5]{LW}, for example. This is based on the following

\begin{lemma}[{\cite[Lemma 4.13, Remark 4.10]{LW}}]\label{l:centraliser-criterion}

\begin{enumerate}
    \item If $M=H\times \bC^\times$, where $H$ is a connected reductive group and $v$ is a highest weight vector in the irreducible $H$-representation $V$, the the isotropy group of $v$ in $M$ is connected.
    \item If the complement of the open $M$-orbit $\CO$ in the $V$ has codimension $\ge 2$ (equivalently, if $V$ is not regular), then $\CO$ is simply connected.
\end{enumerate}
\end{lemma}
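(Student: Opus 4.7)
My plan for part (1) is to identify $Z_M(v)$ with the graph of a character on a parabolic subgroup. Since $v$ is a nonzero highest weight vector for the connected reductive group $H$ acting irreducibly on $V$, the stabilizer $P:=\mathrm{Stab}_H(\bC v)$ of the line through $v$ is a parabolic subgroup of $H$ (the $H$-orbit of $\bC v$ is the closed orbit in the projective space of $V$), and is therefore connected. Any $(h,z)\in Z_M(v)$ must send $v$ into $\bC v$, forcing $h\in P$. Writing $h\cdot v=\lambda(h)\,v$ for the weight character $\lambda\colon P\to\bC^\times$, and using that in our setting the $\bC^\times$-factor acts on $V$ by a nontrivial character (in the cases of interest, scalar multiplication $z\cdot v=zv$), the constraint $h\cdot v=z^{-1}v$ becomes the single equation $z=\lambda(h)^{-1}$. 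Thus $Z_M(v)$ is the graph of $\lambda^{-1}\colon P\to\bC^\times$ inside $P\times\bC^\times$, isomorphic to $P$ via the first projection, hence connected.

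For part (2), the plan is to invoke the standard algebro-topological fact that if $X$ is a smooth complex variety and $Z\subset X$ is a closed subvariety of complex codimension at least two, then the inclusion $X\setminus Z\hookrightarrow X$ induces an isomorphism on fundamental groups; loops and null-homotopies in $X$ can be perturbed off $Z$ by transversality. Taking $X=V$, an affine space with $\pi_1(V)=1$, and $Z=V\setminus\CO$, we obtain $\pi_1(\CO)=1$. The parenthetical equivalence is just the definition of regularity for a prehomogeneous vector space: $V$ is called regular precisely when the complement of the open orbit is a divisor, so the codimension-$\ge 2$ hypothesis is the failure of regularity.

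Neither part presents a serious obstacle; both assertions are standard structural facts. The one point in part (1) that requires care in each application is verifying that the $\bC^\times$-factor acts nontrivially on $v$, so that $Z_M(v)$ is genuinely the graph of a character rather than a product containing a potentially disconnected kernel $\ker(\lambda)\times\bC^\times$. For the factorisations $M=\bC^\times\times H$ appearing in Table \ref{ta:abelian}, this is immediate, since the $\bC^\times$ factor is the identity component of the centre of $M$ acting with weight one on the $(+1)$-eigenspace of $\ad(\lambda)$ on $\fg$.
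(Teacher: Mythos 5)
The paper cites this lemma from \cite{LW} (Lemma 4.13, Remark 4.10) without reproducing a proof, so there is no in-paper argument to compare against. Your reconstruction is essentially the standard one and is correct in all the cases in which the lemma is actually used in this paper.

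Part (2) is fine. The topological input --- removing a closed complex subvariety of complex codimension $\ge 2$ from a smooth variety does not change $\pi_1$ --- is standard (it requires a stratified transversality argument if $Z$ is singular, not just the smooth tubular-neighbourhood picture, but the conclusion is the same), and $V$ is an affine space hence simply connected. One small imprecision: the equivalence ``complement has codimension $\ge 2$ iff $V$ is not regular'' is not, as you write, ``just the definition'' of regularity; the paper's working definition, following Rubenthaler, is that the isotropy group of a generic point is reductive. The equivalence with the complement being a hypersurface is a standard theorem of Sato--Kimura, so the claim is true, just not definitional.

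Part (1): The key structural fact is right --- for a highest weight vector $v$ in an irreducible representation of a connected reductive $H$, the line-stabilizer $P = \mathrm{Stab}_H(\bC v)$ is a parabolic, hence connected --- and identifying $Z_M(v)$ with the graph of a character over $P$ gives connectedness. You correctly flag that this needs a hypothesis on the $\bC^\times$-factor, but the dichotomy you set up (graph of a character vs.\ product $\ker\lambda\times\bC^\times$) is slightly too coarse. If $\bC^\times$ acts by a nontrivial but imprimitive character $z\mapsto z^k$, $k>1$, then $Z_M(v)=\{(h,z):\lambda(h)z^k=1\}$ is a $k$-fold cover of $P$, whose connectedness depends on whether $\lambda$ is divisible in the character lattice of $P$; it is not automatically a graph. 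The argument is airtight precisely when $\bC^\times$ acts by scalar multiplication (primitive character), which is what happens in every row of Table \ref{ta:abelian} --- there $\bC^\times$ is the connected centre of $M$ acting on a single $\ad(\lambda)$-eigenspace, so it acts with weight one. This is almost certainly an implicit hypothesis in \cite{LW}'s statement as well. In short: your proof is correct for the intended applications, but the sufficient condition is ``$\bC^\times$ acts by scalars'' rather than ``$\bC^\times$ acts nontrivially.''
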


Hence, there are also three simple equivariant $\C D_V$-modules. By {\it loc. cit.}, the quiver $\hat Q$ consists of three isolated vertices.

\subsection{$E_7$} This is the case $E_6\times \bC^*$ acting on $V(27)$. The upper canonical string in Bourbaki coordinates is
\[
\gamma_1=-\epsilon_7+\epsilon_8,\quad \gamma_2=\epsilon_5+\epsilon_6,\quad \gamma_3=-\epsilon_5+\epsilon_6.
\]
leading to four orbits $\CO_0=\{0\}$, $\CO_1$, $\CO_2$, $\CO_3$, as in Theorem \ref{t:abelian-class}. The corresponding nilpotent $G$-orbits are $1$, $A_1$, $2A_1$, and $(3A_1)''$ in Carter's notation \cite{Car}. In terms of Theorem \ref{t:abelian-class-2}, the corresponding Weyl group elements are
\[
1,\quad s_7,\quad w_1, \quad  w^0 w^0_J,
\]
where $w_1=s_7s_6s_5s_4s_2s_3s_4s_5s_6s_7$. 
If $w=s_7$, then $K=\{1,2,3,4,5\}$ (type $D_5$), so $w'=w_J^0w_K^0$ has length $\ell(w')=36-20=16.$ Hence
\[
\dim\CO_1=17.
\]
If $w=w_1$, $\ell(w)=10$, then $K=\{2,3,4,5,6\}$ (type $D_5$), so $\ell(w')=16$ again. Hence
\[
\dim\CO_2=26.
\]
The isotropy group of $\CO_1$ is connected by Lemma \ref{l:centraliser-criterion}. It remains to decide the cases $\CO_2$ and $\CO_3$. These are not computed in \cite{LW}. We claim that $\CO_2$ is simply connected, but $\CO_3$ is not. By \cite{Lu-perverse}, see also \cite[Proposition 2]{Ci-multi} for a summary, if $\CO$ is an $M$-orbit in $\fg_1$, and $x\in \CO$, there exists a Levi subgroup $L$ of $G$ such that $x\in\fk l$ and $A_L(x)=A_M(x)$. Moreover, it is known and easy to see that the natural map $A_L(x)\to A_G(x)$ is injective. This means, in particular, that whenever $A_G(x)=\{1\}$, $A_M(x)$ must also be trivial. This is the case for all $3$ orbits $1$, $A_1$, $2A_1$, hence $A_M(x_2)=\{1\}$. Now, for $\CO_3$, by {\it loc. cit.}, $L=G$ (this is the \emph{rigid} case in the sense of \cite{Lu-perverse}), and so $A_M(x_3)=A_G(x_3)$. The nilpotent $(3A_1)''$ has component group $\bZ/2\bZ$ in simply-connected $E_7$, hence $A_M(x_3)=\bZ/2\bZ$.

There are five simple equivariant $\C D_V$-modules, labeled $(i)$ (with support $\CO_i$), $0\le i\le 3$, and $(3)'$. The corresponding quiver is \cite[\S5.5, Case 10]{LW}:

\begin{equation}
\begin{aligned}
\xymatrix{(0)\ar@<1ex>[r] &(1)\ar@<1ex>[l]\ar@<1ex>[r]&(2) \ar@<1ex>[l]\ar@<1ex>[r]&(3)\ar@<1ex>[l]},\\
\end{aligned}
\end{equation}
where all $2$-cycles are zero, and $(3)'$ is isolated.

\begin{theorem}\label{t:abelian-micro}
    Suppose $(M,V)$ is as in Table \ref{ta:abelian}. 
    \begin{enumerate}
        \item If all the orbits $\CO_0,\CO_1,\dots,\CO_r$ are simply connected, then $CC((i))=\overline{T^*_{\CO_i}V}$ and $\CA(\CO_i)=\{(i)\}$, $0\le i\le r$. This is the case when 
        \begin{enumerate}
            \item $\Delta=A_{n-1}$ and any $1\le \ell\le n$;
             \item $\Delta=D_n$ and $\ell=n-1,n$;
             \item $\Delta=E_6$ and $\ell=1,6$;
        \end{enumerate}
        \item If $\Delta=C_n$, with orbits $\CO_0,\CO_1,\dots,\CO_n$, then 
        \[
        CC((i))=\begin{cases}\overline{T^*_{\CO_i}V},& n\equiv i (\text{mod }2),\\
        \overline{T^*_{\CO_i}V}+\overline{T^*_{\CO_{i-1}}V},& n\not\equiv i (\text{mod }2)
        \end{cases}, \quad CC((i)')=\begin{cases}\overline{T^*_{\CO_i}V},& n\not\equiv i (\text{mod }2),\\
        \overline{T^*_{\CO_i}V}+\overline{T^*_{\CO_{i-1}}V},& n\equiv i (\text{mod }2)
        \end{cases}
        \]
    The microlocal packets are $\CA(\CO_n)=\{(n),(n)'\}$, $\CA(\CO_0)=\{(0),(1)\}$, and $\CA(\CO_i)=\{(i),(i)',(i+1)\}$ if $i\neq 0,n$.
    \item If $\Delta=B_n$, with orbits $\CO_0,\CO_1,\CO_2$, then 
    \[
    CC((2))= \overline{T^*_{\CO_2}V},\  CC((0))= \overline{T^*_{\CO_0}V},\  CC((1))= \overline{T^*_{\CO_0}V}+\overline{T^*_{\CO_1}V},\ CC((2)')= \overline{T^*_{\CO_2}V}+\overline{T^*_{\CO_1}V}.
    \]
    The microlocal packets are $\CA(\CO_2)=\{(2),(2)'\}$, $\CA(\CO_1)=\{(1),(2)'\}$, $\CA(\CO_0)=\{(0),(1)\}.$
    \item $\Delta=D_n$ and $\ell=1$, with orbits $\CO_0,\CO_1,\CO_2$, then 
    \[
    CC((2))= \overline{T^*_{\CO_2}V},\  CC((0))= \overline{T^*_{\CO_0}V},\  CC((1))= \overline{T^*_{\CO_1}V},\ CC((2)')= \overline{T^*_{\CO_2}V}+\overline{T^*_{\CO_1}V}+\overline{T^*_{\CO_0}V}.
    \]
    The microlocal packets are $\CA(\CO_i)=\{(i),(2)'\}$, $i=0,1,2$.

     \item $\Delta=E_7$ and $\ell=7$, with orbits $\CO_0,\CO_1,\CO_2,\CO_3$, then
      \[
    CC((i))= \overline{T^*_{\CO_i}V},\ i=0,1,2,3,  \ CC((3)')= \overline{T^*_{\CO_3}V}+\overline{T^*_{\CO_3}V}+\overline{T^*_{\CO_1}V}+\overline{T^*_{\CO_0}V}.
    \]
     The microlocal packets are $\CA(\CO_i)=\{(i),(2)'\}$, $i=0,1,2,3$.
\end{enumerate}

\end{theorem}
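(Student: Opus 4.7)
The plan is a case-by-case verification relying on three ingredients recalled in Sections \ref{s:2} and \ref{s:explicit}: (i) the multiplicity-freeness of characteristic cycles for equivariant $\C D$-modules on a spherical prehomogeneous vector space \cite[Corollary 3.19]{LW}; (ii) the Fourier-Chevalley involution $\widetilde\CF$ on the category of $\C D_V$-modules, which sends a simple $S$ with $CC(S)=\sum c_i[\overline{T^*_{\CO_i}V}]$ to a simple with $CC=\sum c_i[\overline{T^*_{\CO_i^\vee}V}]$, exchanges the skyscraper at $0$ with the $\C D$-module of functions on $V$, and induces a graph automorphism of the quiver $\hat Q$; (iii) the explicit quivers $\hat Q$ recorded in Section \ref{s:explicit}.

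The first step is to pin down the ``boundary'' simples. Since $(0)$ is the skyscraper at the origin, $CC((0))=\overline{T^*_{\CO_0}V}$. Applying $\widetilde\CF$ and using $\CO_0^\vee=\CO_r$ from Theorem \ref{t:abelian-class}(4), the $\C D$-module of functions on $V$ has $CC=\overline{T^*_{\CO_r}V}$ and is the unique simple with full support, identified in each case as $(r)$ or $(r)'$ according to whichever local system on the open orbit is exchanged with the trivial sheaf at $0$ by $\widetilde\CF$. Furthermore, the closure relation $\CO_i\subset\overline{\CO_j}\iff i\le j$ (Theorem \ref{t:abelian-class}(2)) constrains $CC((i))$ to contain only conormals of orbits $\CO_j$ with $j\le i$; applying $\widetilde\CF$ yields the dual upper bound on lower terms.

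For case (1), each orbit is simply connected, so there is a unique simple per orbit. If $\hat Q$ is disconnected the category is semisimple and the claim is immediate. If $\hat Q$ is a chain with vanishing $2$-cycles, $\widetilde\CF$ reverses the chain, giving $\widetilde\CF((i))=(r-i)$; combining the support bound with its Fourier-dual rules out every strictly lower conormal, leaving $CC((i))=\overline{T^*_{\CO_i}V}$. For cases (2)--(5), one reads the $\widetilde\CF$-action off the explicit $\hat Q$ and matches it against Piasetskii duality: in type $C_n$, $\widetilde\CF$ swaps the two connected components, pairing $(i)$ with $(n-i)'$ or $(n-i)$ according to the parity of $n-i$; in types $B_n$, $D_n$ with $\ell=1$, and $E_7$, $\widetilde\CF$ acts on the mixed chain containing both trivial and sign local systems. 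In each case, multiplicity-freeness together with the Fourier-symmetry constraint and the known endpoints uniquely determines the $CC$ coefficients, matching the formulae in the statement. The microlocal packets $\CA(\CO_i)=\{\C E:m_{\CO_i}(\C E)\neq 0\}$ are then read off the characteristic cycles directly.

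The main obstacle lies in cases (2)--(5), specifically in identifying how the graph automorphism of $\hat Q$ induced by $\widetilde\CF$ permutes trivial versus sign local systems on the orbits with non-trivial component group $\bZ/2\bZ$. This is most delicate for $B_n$, $D_n$ ($\ell=1$), and $E_7$, where $\widetilde\CF$ may pair the sign local system on a top-dimensional orbit with a trivial local system on a smaller orbit, producing the ``long'' characteristic cycles such as $CC((2)')$ in cases (3)--(5). These pairings, however, are forced by the rigidity of the chain-shaped $\hat Q$ together with the fixed action of $\widetilde\CF$ on the two extreme vertices, so the verification ultimately reduces to finite bookkeeping within each case of Table \ref{ta:abelian}.
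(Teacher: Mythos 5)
Your overall strategy overlaps substantially with the paper's (identify $\widetilde\CF$ as a graph automorphism of $\hat Q$, pin down the endpoints, propagate via Piasetskii duality, and use multiplicity-freeness and support constraints), and it does succeed in cases (1), (2), and (3). For instance, in case $B_n$, $\widetilde\CF$ swaps the two chains $(1){-}(2)$ and $(0){-}(2)'$, sending $(1)\mapsto(2)'$; support of $(2)'$ is $\overline{\CO_2}$, so the $\CO_0$-term in $CC((1))$ must be nonzero, which determines everything. In type $C_n$ the isolated vertices come in $\widetilde\CF$-conjugate pairs that are not self-dual, and the same support argument pins down their characteristic cycles.

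However, there is a genuine gap in cases (4) and (5). In $D_n$ with $\ell = 1$ (and likewise $E_7$), there is a single isolated vertex of $\hat Q$, namely $(2)'$ (resp.\ $(3)'$), which is necessarily fixed by $\widetilde\CF$ and has full support. For a $\widetilde\CF$-fixed simple the symmetry constraint only says $c_i = c_{r-i}$; combined with $c_r = 1$ from the support condition, this gives $c_0 = 1$, but it leaves the coefficient of the self-dual orbit $\CO_1$ in case (4) (and the pair $\CO_1, \CO_2$ in case (5)) completely undetermined. Multiplicity-freeness only says these coefficients lie in $\{0,1\}$, and the closure relations give no further restriction since the support is all of $V$. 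You therefore cannot conclude by ``finite bookkeeping'' that $c_1 = 1$. The paper supplies exactly the missing input at this point: Ginzburg's theorem \cite[Theorem 3.2]{Gi} guarantees that the characteristic variety of a $\widetilde\CF$-fixed simple (the cuspidal module on the open orbit) contains the conormal bundles of \emph{all} orbits, which is what forces the middle coefficients to be $1$. Without this (or some equivalent computation, e.g.\ of local Euler obstructions), your argument does not determine $CC((2)')$ in case (4) or $CC((3)')$ in case (5).
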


\begin{proof}
    The twisted Fourier transform $\widetilde{\C F}$ induces a graph auto of the quiver $\hat Q$. It can be determined explicitly in the above cases using the description of $\hat Q$ and the remarks at the end of section \ref{s:2}. 

In addition, the $\C D$-modules that correspond to cuspidal local systems on the open orbit, e.g., $(2)'$ in case (4) and $(3)'$ in case (5), are fixed by $\widetilde{\C F}$. Then a result of Ginzburg\footnote{The author thanks Lucas Mason-Brown for the reference.} \cite[Theorem 3.2]{Gi} implies that their characteristic varieties contain the conormal bundles to all the orbits.
    
    For the cases above, these ideas are sufficient to determine the characteristic cycles, but, in general that might not be the case.
See also \cite{LW,LR} and the references therein, where these results  appear as well. 

\end{proof}

\ifx\undefined\bysame
\newcommand{\bysame}{\leavevmode\hbox to3em{\hrulefill}\,}
\fi

\end{document}